\documentclass[11pt,reqno]{amsart}

\usepackage[T1]{fontenc}
\usepackage{amsmath,amssymb,amsthm,mathtools}
\usepackage{enumitem,geometry}
\usepackage{microtype}
\usepackage{cite}
\usepackage{xcolor}
\usepackage[colorlinks=true,
            linkcolor=blue!50!black,
            citecolor=blue!50!black,
            urlcolor=blue!50!black]{hyperref}
\allowdisplaybreaks[4]
\numberwithin{equation}{section}

\newtheorem{theorem}{Theorem}[section]
\newtheorem{proposition}[theorem]{Proposition}
\newtheorem{lemma}[theorem]{Lemma}
\newtheorem{corollary}[theorem]{Corollary}

\theoremstyle{definition}

\theoremstyle{remark}
\newtheorem{remark}[theorem]{Remark}
\newtheorem{example}[theorem]{Example}

\newcommand{\C}{\mathbb C}
\newcommand{\Sym}{\operatorname{Sym}}
\newcommand{\End}{\operatorname{End}}
\newcommand{\Hom}{\operatorname{Hom}}
\newcommand{\Ind}{\operatorname{Ind}}
\newcommand{\Res}{\operatorname{Res}}
\newcommand{\Stab}{\operatorname{Stab}}
\newcommand{\rad}{\operatorname{rad}}

\newcommand{\Triv}{\mathbf 1}
\newcommand{\sgn}{\operatorname{sgn}}
\newcommand{\GL}{\operatorname{GL}}

\newcommand{\cF}{\mathcal F}

\title{Invariant Jacobian and the Center of a Homogeneous Form}
\author[]{Yue Hu}
\address[]{Y.H.: Beijing University of Posts and Telecommunications, School of Mathematical Sciences, Beijing, China}
\email[]{huyue@bupt.edu.cn}

\date{September 2, 2026}
\subjclass[2020]{Primary 13A50; Secondary 20C15, 20C30}
\keywords{Invariant Jacobian, center of a form, 
system of imprimitivity, symmetric group}
\hypersetup{
  pdftitle={Invariant Jacobian and the Center of a Homogeneous Form},
  pdfauthor={Yue Hu}
}

\begin{document}
\begin{abstract}
We study homogeneous forms with invariant Jacobian under 
group actions.  For finite-dimensional irreducible complex representations, we show
that the canonical decomposition defined by the center of the form
is either trivial or gives a system of imprimitivity, which  determines the module structure of Jacobian $J(f)$.  We apply the theory to symmetric groups and classify homogeneous forms with invariant Jacobian on the natural permutation module.
\end{abstract}

\maketitle

\section{Introduction}\label{sec:introduction}

Let $V$ be a finite-dimensional complex vector space and let
$f\in\Sym^d(V^*)$ be a homogeneous form of degree $d\geq3$.  The \emph{Jacobian}  of $f$  is
\begin{equation}\label{eq:intro-J}
  J(f)=\langle D_vf:v\in V\rangle
  \subseteq\Sym^{d-1}(V^*),
\end{equation}
where $D_v$ denotes directional differentiation. If a group $G$ acts linearly on $V$, then $J(g\cdot f)=g\cdot J(f)$. Thus a relative invariant $f$ (that is $\C f$ is invariant) always has $G$-invariant Jacobian.
The question is to what extent the converse holds:
\begin{equation}\label{eq:main-question}
  J(f)\text{ is $G$-invariant}
  \quad\Longrightarrow\quad
  \C f\text{ is $G$-invariant}.
\end{equation}

This problem goes back to work of Yau on isolated hypersurface
singularities and the Lie algebras associated with their moduli
algebras \cite{MatherYau,YauMemoir}. Motivated by a conjecture of Yau, Kempf studied invariant Jacobians for rational representations of connected semisimple algebraic groups.  He proved
\eqref{eq:main-question} when the representation is irreducible
or the projective hypersurface $f=0$ is smooth,  and
also showed that an invariant Jacobian admits an invariant form with the same Jacobian \cite{Kempf}.  Xi used Kempf's theorem to prove
Yau's highest-weight conjecture and then turned to invariant
Jacobians for general groups \cite{Xi}. He
asked for the $G$-module structure of $J(f)$, the existence of a
relative invariant with the same Jacobian as $f$, and a
description of all forms having the same Jacobian as $f$, see \cite[Section~3.5]{Xi}.

Our approach is based on the center $Z(f)$ of a higher degree form.
For a nondegenerate form $f$ (equivalently,
$\dim J(f)=\dim V$), $Z(f)$ is a finite-dimensional commutative
algebra, whose primitive idempotents determine the canonical
decompositions
\[
  V=V_1\oplus\cdots\oplus V_r,
  \qquad
  f=f_1+\cdots+f_r,
\]
see \cite{Harrison,ORyanShapiro,HLYZ}.  If $V$ is irreducible and $J(f)$ is $G$-invariant, then $f$ is
nondegenerate and $\rad Z(f)=0$.  The induced action of $G$ on
$Z(f)$ therefore permutes the canonical summands
$V_1,\ldots,V_r$, and irreducibility forces this action to be
transitive.  Thus either $r=1$, or
\[
  V=V_1\oplus\cdots\oplus V_r
\]
is a nontrivial system of imprimitivity for $V$. This is the structural link between invariant Jacobians and
imprimitivity.  It reduces the problem to one canonical summand and
its stabilizer.  Our main structure theorem shows that the canonical decomposition determines the $G$-module structure of $J(f)$ and all homogeneous forms having the same
Jacobian.  Thus Theorem~\ref{thm:jacobian-structure} gives a uniform
answer to Xi's questions for arbitrary finite-dimensional irreducible
complex representations. In particular, if $V$ admits no nontrivial system of imprimitivity,
that is, if $V$ is primitive,  we obtain:

\begin{theorem}\label{thm:primitive}
Let $G$ be a group and let $V$ be a finite-dimensional primitive
irreducible complex $G$-module.  If
\[
  0\ne f\in\Sym^d(V^*),\qquad d\geq3,
\]
and $J(f)$ is $G$-invariant, then $\C f$ is $G$-invariant.
\end{theorem}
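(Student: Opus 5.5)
Since $V$ is primitive, the canonical decomposition determined by $Z(f)$ must be trivial. The plan is to show that this forces the centre to consist of scalars, and then that $g\cdot f$ and $f$ are proportional. The center of $f$ is
\[
Z(f)=\{A\in\End(V): \text{the bilinear map } (u,v)\mapsto D_{Au}D_vf \text{ is symmetric}\},
\]
a commutative subalgebra containing $\C\,\mathrm{id}$ when $f$ is nondegenerate (by the results cited from \cite{Harrison,HLYZ}).

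\emph{Step 1: $f$ is nondegenerate.} The map $v\mapsto D_vf$ is $G$-equivariant from $V$ into the $G$-module $\Sym^{d-1}(V^*)$, once we transport $G$ through $g\cdot D_vf=D_{gv}(g\cdot f)$. More directly, the kernel $K=\{v: D_vf=0\}$ is $G$-stable. Indeed, $J(g\cdot f)=gJ(f)=J(f)$ gives $J(g\cdot f)=J(f)$, and the space of $v$ with $D_v$ killing every element of $J(f)$ has a $G$-stable description: $D_vf=0$ iff $D_v$ annihilates all of $J(f)$ (using $d\ge3$ and Euler's formula, a form with $D_uD_vf=0$ for all $u$ has $D_vf=0$). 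So $K$ is a subrepresentation, hence $K=0$ or $V$. Since $f\neq0$, we get $K=0$, and $\dim J(f)=\dim V$.

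\emph{Step 2: $G$ acts on $Z(f)$ and $\rad Z(f)=0$.} Write $J(f)=\{D_vf\}$. Then $g\cdot f\in\Sym^d$ has the same Jacobian, so there is a unique $A_g\in\GL(V)$ with $D_v(g\cdot f)=D_{A_gv}f$. Symmetry of second derivatives of $g\cdot f$ shows $A_g\in Z(f)$. I expect the key fact to be the characterization that forms with the same Jacobian as $f$ are exactly $D_{A}$-images, i.e. $h$ with $D_vh=D_{Av}f$ and $A\in Z(f)^\times$. This is a standard center identity; if it is not already available, I would prove it directly from the symmetry of $D_uD_vh$. Conjugation $A\mapsto \rho(g)A\rho(g)^{-1}$ then maps $Z(f)$ to $Z(g\cdot f)=Z(f)$, the latter equality because $Z$ depends only on $J$ through this characterization. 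Hence $G$ acts on $Z(f)$ by algebra automorphisms. The radical is a $G$-stable nilpotent ideal, so $\rad Z(f)\cdot V$ is a $G$-stable proper subspace. By irreducibility it is $0$, and therefore $\rad Z(f)=0$.

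\emph{Step 3: primitivity finishes the argument.} Now $Z(f)\cong\C^r$ is semisimple, with primitive idempotents $e_i$ and $V=\bigoplus e_iV$. The group $G$ permutes the $e_i$, so it permutes the summands. Because $V$ is primitive, we must have $r=1$, so $Z(f)=\C\,\mathrm{id}$. Then $A_g=\lambda_g\,\mathrm{id}$, which gives $D_v(g\cdot f-\lambda_g f)=0$ for all $v$. Euler's formula then yields $g\cdot f=\lambda_g f$.

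The main obstacle is Step 2: showing that $J(h)=J(f)$ implies $h$ is obtained from $f$ via an element of $Z(f)$ with $Z(h)=Z(f)$, so that the conjugation action genuinely preserves $Z(f)$. I would handle this by working with the intrinsically defined symmetric algebra of operators on $J(f)$ itself. That means taking $Z$ as the set of endomorphisms $\phi$ of $J(f)$ commuting appropriately with differentiation, which is manifestly $G$-invariant once $J(f)$ is.
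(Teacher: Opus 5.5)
Your proposal is correct and follows the paper's route. The steps match: nondegeneracy from $J(f)^\perp=f^\perp$, the unit $A_g\in Z(f)^\times$ with $D_v(g\cdot f)=D_{A_gv}f$, the conjugation action of $G$ on $Z(f)$, $\rad Z(f)=0$ because $\rad Z(f)\cdot V$ is a proper $G$-stable subspace, and primitivity forcing a single block, after which $A_g$ is scalar.

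Two points need finishing. The step you flag as the main obstacle closes in one line, as in the paper: $D_{Xu}D_v(g\cdot f)=D_{Xu}D_{A_gv}f$ is symmetric in $(u,v)$ exactly when $A_gX\in Z(f)$, i.e.\ when $X\in Z(f)$ (since $A_g^{-1}\in Z(f)$), which gives $Z(g\cdot f)=Z(f)$. In Step 3 you should state that irreducibility makes $G$ permute the blocks $e_iV$ transitively (each orbit sum is a submodule), since a transitive system is what primitivity rules out.
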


For connected semisimple algebraic groups, this recovers Kempf's
theorem for irreducible modules. Moreover, for finite groups,  we show that primitivity
exactly characterizes the irreducible representations for which
\eqref{eq:main-question} holds:

\begin{theorem}\label{thm:finite-converse}
Let $G$ be a finite group and let $V$ be a finite-dimensional irreducible complex $G$-module.
Then the following are equivalent:
\begin{enumerate}[label=\textup{(\roman*)},leftmargin=0.9cm]
\item $V$ is primitive;
\item for every $d\geq3$ and every
      $0\ne f\in\Sym^d(V^*)$, $G$-invariance of $J(f)$ implies
      $G$-invariance of $\C f$.
\end{enumerate}
\end{theorem}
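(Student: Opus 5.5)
The implication (i)$\Rightarrow$(ii) is exactly Theorem~\ref{thm:primitive}, specialized to finite groups, so the work lies in (ii)$\Rightarrow$(i). I will prove the contrapositive. Suppose $V$ is imprimitive. Then I need some $d\ge3$ and some $f\neq0$ such that $J(f)$ is $G$-invariant while $\C f$ is not.

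\textbf{Construction.} Fix a nontrivial system of imprimitivity $V=V_1\oplus\cdots\oplus V_r$ with $r\ge2$, permuted transitively by $G$. Put $H=\Stab(V_1)$, so $V=\Ind_H^G V_1$.

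Let $\pi_i\colon V\to V_i$ be the projections. Take a form $h\in\Sym^d(V_1^*)$ and set $f_i=g_i\cdot h$, where the $g_i$ are coset representatives with $g_iV_1=V_i$. Each $f_i$ lives on $V_i$, extended by zero through $\pi_i$.

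For the first summand I choose $h$ so that:
\begin{enumerate}[label=\textup{(\alph*)},leftmargin=0.9cm]
\item $h$ is nondegenerate on $V_1$;
\item $J(h)$ is $H$-invariant;
\item $\C h$ is $H$-invariant, with $h$ transforming by a character $\chi$ of $H$.
\end{enumerate}

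\textbf{Choosing $h$.} Since $H$ is finite, the invariant ring $\Sym(V_1^*)^{H'}$ for $H'=\ker(\text{action on }V_1)$ suggests a natural candidate. The simplest choice is an $H$-invariant form of degree $d$ whose Jacobian has full dimension $\dim V_1$. Such forms exist for suitable $d$, for instance $d=|H|\cdot m+1$: average $\ell^d$ over $H$, or take $\sum_{\sigma\in H}(\sigma\cdot\ell)^d$ for a generic linear form $\ell$. For large $d$ this power-sum form is nondegenerate, because its partials span the span of the $\sigma\ell$, which is all of $V_1^*$ by irreducibility of $V_1$ as an $H$-module.

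Then define
\[
  f_c=\sum_i c_i\, f_i, \qquad c_i\in\C^\times .
\]
Since $V_i\cap V_j=0$, the Jacobian splits as $J(f_c)=\bigoplus_i J(f_i)$, and this is independent of the $c_i$. It is also $G$-invariant, because $G$ permutes the $J(f_i)$: for $g\in G$ we have $g\cdot f_i=\chi'(\cdot)f_{j}$ up to the $H$-character twist.

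\textbf{Breaking invariance of the line.} Now take $c=(1,2,1,\ldots,1)$, or any $c$ that is not constant up to the twist. Then $g\cdot f_c$ is a combination with permuted coefficients. Because the $f_i$ are linearly independent (they live on disjoint variables) and $G$ is transitive with $r\ge2$, there is $g$ sending $V_1\mapsto V_2$. Thus $g\cdot f_c$ has the ratio of coefficients of $f_1,f_2$ altered unless $c_1/c_2$ matches the character factor, and we choose $c$ to violate this. Hence $\C f_c$ is not $G$-invariant.

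\textbf{Main obstacle.} The step needing care is ensuring $J(f_c)$ is genuinely $G$-invariant and that $d\ge3$ can be arranged. This reduces to $J(h)$ being $H$-stable, which holds since $\C h$ is. It also requires that $J(g\cdot f_i)=g\cdot J(f_i)$ land inside $\Sym^{d-1}(V^*)$ as the correct summand, which follows from $J(g\cdot f)=g\cdot J(f)$. Nondegeneracy is not even needed for the counterexample. Any nonzero $H$-semi-invariant $h$ of degree $d\ge3$ suffices, e.g.\ $h=\sum_{\sigma\in H}(\sigma\cdot\ell)^d$, provided it is nonzero, which holds for generic $\ell$ when $d$ is divisible by the exponent of the scalar action and large. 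If one prefers, Theorem~\ref{thm:jacobian-structure} can be invoked to identify this as the general shape of all such forms.
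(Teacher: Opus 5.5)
Your argument is the paper's proof. You take a nonzero $H$-semi-invariant form on one block (in your concrete choice, an $H$-invariant one), transport it to the other blocks, and sum the translates with non-symmetric coefficients. As you note, only $h\neq0$ matters, not nondegeneracy.

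Your $c=(1,2,1,\ldots,1)$ works as well as the paper's $c_i=2^i$, even with a root-of-unity twist. Take $g$ with $gV_1=V_2$ and let $\sigma$ be the induced permutation of blocks. If $g\cdot f_c=\lambda f_c$, comparing absolute values of the coefficients of $f_2$ and of $f_{\sigma(2)}$ gives $|\lambda|=1/2$ and $|\lambda|=2$.

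One step, as first stated, is wrong: the suggested degree $d=|H|m+1$. If some element of $H$ acts on $V_1$ by a nontrivial scalar, then for such $d$ both $\sum_{\sigma\in H}(\sigma\cdot\ell)^d$ and the Reynolds average of $\ell^d$ vanish identically. For example, take the monomial representation of the dihedral group of order $8$ on $\C^2$, with $V_1=\C e_1$ and $H$ the diagonal subgroup. There the power sum is $2(1+(-1)^d)x_1^d$, which is $0$ for $d=4m+1$.

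Your closing condition (divisibility of $d$ by the order of the scalar action, $d$ large, $\ell$ generic) repairs this. The cleaner route is Lemma~\ref{lem:finite-nondegenerate-invariant}: the product of the $H$-translates of a spanning set of linear forms is automatically nonzero and $H$-invariant, needing no divisibility bookkeeping, and is even nondegenerate. Your nondegeneracy remark is also misstated: the partials of a power sum lie in the span of the $(\sigma\ell)^{d-1}$, not of the $\sigma\ell$. This is harmless, since nondegeneracy is not needed.
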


Finally, we apply the general theory to symmetric groups $S_n$, with $n\geq 5$. Using the known classification of systems of imprimitivity for irreducible
Specht modules \cite{DjokovicMalzan}, we obtain a
concrete description of invariant Jacobians on irreducible
$S_n$-modules, see Theorem \ref{thm:Sn-classification}. We then consider the natural permutation module $P_n$, which is reducible and lies outside the irreducible theory
above.   Using the special submodule structure of $P_n$, we  determine the possible canonical decompositions and
obtain all forms with invariant Jacobians, yielding a complete answer to Xi's questions for $P_n$.

The paper is organized as follows.
Section~\ref{sec:center} recalls the center of a form and develops
its relation with invariant Jacobians.
Section~\ref{sec:imprimitivity} proves the main theorems stated  above.
Section~\ref{sec:Sn} applies the theory to symmetric groups.

\section{Center and Invariant Jacobian}\label{sec:center}

\subsection{Center of a form}

We begin by recalling the center theory of higher degree forms.
The basic notions go back to Harrison \cite{Harrison}. We use the
coordinate-free and Hessian descriptions in the form developed by Huang--Lu--Ye--Zhang \cite{HLYZ},  see also
\cite{Proszynski,ORyanShapiro}.  We also record the relation between the center and the Jacobian fibre. The latter appears in different forms in Kempf \cite{Kempf},
Huang--Lu--Ye--Zhang \cite{HLYZ}, and, most explicitly, in the
symmetrizer-group formulation of Hwang \cite{Hwang}.  The purpose of
this section is to collect these facts in a form suited to the group
actions used later.

 Let $V$ be
a finite-dimensional complex vector space and $d\geq3$. A form $f\in\Sym^d(V^*)$ determines a symmetric $d$-linear form $\Theta_f:V^d\longrightarrow\C$ by polarization:
\begin{equation}\label{eq:polarization}
  \Theta_f(v_1,\ldots,v_d)
  =
  \frac{1}{d!}
  \left.
  \frac{\partial^d}{\partial t_1\cdots\partial t_d}
  f(t_1v_1+\cdots+t_dv_d)
  \right|_{t_1=\cdots=t_d=0},
\end{equation}
in which $\Theta_f(v,\ldots,v)=f(v)$. Conversely, if
$\Theta:V^d\to\C$ is a symmetric $d$-linear form, then $f_\Theta(v):=\Theta(v,\ldots,v)$ is a form in $\Sym^d(V^*)$. The assignments
\[
  f\longmapsto\Theta_f,
  \qquad
  \Theta\longmapsto f_\Theta
\]
are inverse to each other, giving a canonical identification between
$\Sym^d(V^*)$ and the space of symmetric $d$-linear forms on $V$. We shall use  the following identities
\begin{gather}
  D_vf(x)
  =
  d\,\Theta_f(v,x,\ldots,x), \label{eq:directional-polarization}
\\ 
D_uD_vf(x)
  =
  d(d-1)\Theta_f(u,v,x,\ldots,x)\label{eq:polarization-2}.
\end{gather}
The form $f$ is \emph{nondegenerate} if no variable can be removed by
an invertible linear change of variables.  This is equivalent to
\[
  f^\perp:=\{v\in V:D_vf=0\}=0,
\]
or in terms of the polarization,
\begin{equation}\label{eq:nondegenerate}
  \Theta_f(v,v_2,\ldots,v_d)=0
  \text{ for all }v_2,\ldots,v_d\in V
  \quad\Longrightarrow\quad v=0.
\end{equation}
If $V=V_1\oplus\cdots\oplus V_r$ and $\Theta_f(v_1,\ldots,v_d)=0$ unless all arguments belong to the
same $V_i$, then the decomposition is orthogonal and
\begin{equation}\label{eq:direct-sum}
  f=f_1+\cdots+f_r,
  \qquad f_i\in\Sym^d(V_i^*).
\end{equation}
Here each $f_i$ is regarded as a form on $V$ via the projection
$V\to V_i$.  Thus an orthogonal decomposition of a form is precisely
a decomposition into a sum of forms in disjoint sets of variables.

Let $H_f=(\partial^2f/\partial x_i\partial x_j)$ be the Hessian
matrix.  The \emph{center} of $f$ is
\begin{equation}\label{eq:center-hessian}
  Z(f)=\{X\in\End(V):(H_fX)^{\mathsf T}=H_fX\}.
\end{equation}
Equivalently,
\begin{equation}\label{eq:center-polar}
\begin{aligned}
 Z(f)=\{X\in\End(V):{}&
 \Theta_f(Xv_1,v_2,v_3,\ldots,v_d)\\
 &{}=\Theta_f(v_1,Xv_2,v_3,\ldots,v_d)
 \text{ for all }v_i\in V\}.
\end{aligned}
\end{equation}
This is Harrison's center.  It also
appears in \cite[Section~5]{Kempf}.   The following are classical results of Harrison \cite{Harrison}.  We use the
formulation of \cite[Proposition~2.1]{HLYZ}, see also
\cite{ORyanShapiro}.

\begin{proposition}[Harrison]\label{prop:HLYZ-center}
Let $f$ be a nondegenerate form of degree $d\geq3$.
\begin{enumerate}[label=\textup{(\roman*)},leftmargin=0.9cm]
\item $Z(f)$ is a finite-dimensional commutative $\C$-algebra.
\item Direct-sum decompositions of $f$ are in bijection with complete
sets of pairwise orthogonal idempotents of $Z(f)$.
\item The decomposition of $f$ into indecomposable summands is unique
up to permutation and equivalence of the summands.
\end{enumerate}
\end{proposition}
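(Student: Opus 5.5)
We prove the three parts in order, working throughout with the polar description \eqref{eq:center-polar} of $Z(f)$ and the nondegeneracy criterion \eqref{eq:nondegenerate}.

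\textbf{Commutativity.} The key identity is that for $X,Y\in Z(f)$ one has $\Theta_f(XYv_1,v_2,\ldots)=\Theta_f(YXv_1,v_2,\ldots)$. Since $d\geq3$, there are at least three slots, so we can move $X$ and $Y$ between slots using the defining property of $Z(f)$ and the symmetry of $\Theta_f$:
\[
\Theta_f(XYv_1,v_2,v_3,\ldots)=\Theta_f(Yv_1,Xv_2,v_3,\ldots)=\Theta_f(v_1,Xv_2,Yv_3,\ldots).
\]
Carrying out the same computation for $YXv_1$ lands at $\Theta_f(v_1,Yv_2,Xv_3,\ldots)$. Applying the center property again, this time between slots $2$ and $3$, the two expressions agree. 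Nondegeneracy then gives $XY=YX$. The remaining claims in (i) are routine: $Z(f)$ is a linear subspace containing $1$ and closed under composition (by the same slot-moving), and it is finite-dimensional because it sits inside $\End(V)$.

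\textbf{Idempotents and decompositions.} Given a complete orthogonal system of idempotents $e_1,\dots,e_r$ in $Z(f)$, set $V_i=e_iV$. For $v\in V_i$ and $w\in V_j$ with $i\neq j$ we compute
\[
\Theta_f(v,w,\ldots)=\Theta_f(e_iv,e_jw,\ldots)=\Theta_f(v,e_ie_jw,\ldots)=0.
\]
By multilinearity, $\Theta_f$ vanishes unless all arguments lie in the same summand, so the decomposition is orthogonal in the sense of \eqref{eq:direct-sum}. Conversely, given an orthogonal decomposition, the projections $p_i$ onto the $V_i$ satisfy the polar identity. Expanding each argument into its components, both sides reduce to $\Theta_f$ evaluated on the $V_i$-components of all arguments. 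These two constructions are inverse to each other.

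\textbf{Uniqueness.} The main obstacle is showing that the decomposition into indecomposables is unique. A decomposition into indecomposable summands corresponds to a complete set of \emph{primitive} orthogonal idempotents, that is, to a decomposition of the algebra $Z(f)$ into indecomposable algebras. Two points have to be checked:
\begin{itemize}
\item The restriction to a summand behaves well: $Z(f_i)$ should equal $e_iZ(f)e_i$, using the nondegeneracy of each $f_i$, which is inherited from $f$. This shows that $f_i$ is indecomposable exactly when $e_i$ is primitive.
\item A finite-dimensional commutative algebra is a product of local algebras, so its complete set of primitive idempotents is unique.
\end{itemize}
Together these make the summands $V_i=e_iV$ canonical. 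Uniqueness "up to permutation and equivalence" then accounts for relabelling the summands, and, if we compare decompositions of forms related by a linear change of variables, for transporting the centers by conjugation. This recovers Harrison's result in the formulation of \cite{HLYZ}.
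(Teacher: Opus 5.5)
The paper gives no proof of this proposition; it is quoted from Harrison and \cite[Proposition~2.1]{HLYZ}. Your outline follows that standard argument. Parts (ii) and (iii) are fine as sketched: projections onto orthogonal summands satisfy the polar identity, elements of $Z(f)$ commute with the $e_i$ so that $Z(f)=\prod_i Z(f_i)$ with each $f_i$ nondegenerate, and a commutative finite-dimensional algebra has a unique complete set of primitive idempotents. The problem is the last step of the commutativity argument, which is not valid as you justify it.

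You reach $\Theta_f(v_1,Xv_2,Yv_3,\ldots)$ and $\Theta_f(v_1,Yv_2,Xv_3,\ldots)$ and say that applying the center property between slots $2$ and $3$ makes them agree. It cannot. Freeze $v_1,v_4,\ldots$ and put $B(a,b)=\Theta_f(v_1,a,b,v_4,\ldots)$. This is a symmetric bilinear form for which $X$ and $Y$ are self-adjoint. The identity $B(Xa,Yb)=B(Ya,Xb)$ is equivalent to $B(a,(XY-YX)b)=0$, and self-adjointness alone does not give that; this is exactly why the center is noncommutative when $d=2$. Moving the slot-$2$ operator into slot $3$ in both expressions turns your claimed identity into $\Theta_f(v_1,v_2,XYv_3,\ldots)=\Theta_f(v_1,v_2,YXv_3,\ldots)$. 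That is the commutativity you are trying to prove, so the step is circular.

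The fix is to use the third slot in the right order. For $YXv_1$, move $Y$ to slot $3$ first and then $X$ to slot $2$:
\[
\Theta_f(YXv_1,v_2,v_3,\ldots)=\Theta_f(Xv_1,v_2,Yv_3,\ldots)=\Theta_f(v_1,Xv_2,Yv_3,\ldots).
\]
This is the same endpoint as your chain for $XYv_1$, and nondegeneracy then gives $XY=YX$. Equivalently, you can reroute your endpoint through slot $1$:
\[
\Theta_f(v_1,Yv_2,Xv_3,\ldots)=\Theta_f(Yv_1,v_2,Xv_3,\ldots)=\Theta_f(Yv_1,Xv_2,v_3,\ldots)=\Theta_f(v_1,Xv_2,Yv_3,\ldots).
\]

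Also, closure of $Z(f)$ under composition does not follow from ``the same slot-moving'' on its own. Slot-moving only gives $\Theta_f(XYv_1,v_2,\ldots)=\Theta_f(v_1,YXv_2,\ldots)$, so you need commutativity first to conclude $XY\in Z(f)$. With these two repairs the argument is complete.
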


Let $e_1,\ldots,e_r$ be the primitive idempotents of $Z(f)$.  The
resulting decompositions
\begin{equation}\label{eq:canonical-decomposition}
  V=V_1\oplus\cdots\oplus V_r,
  \qquad V_i=e_iV,
  \qquad f=f_1+\cdots + f_r
\end{equation}
will be  called the \emph{canonical decomposition} associated to  $f$.  Here $V_1,\dots,V_r$ will be called the canonical blocks of $f$, 
$f_i=f_{e_i}$ is the restriction of $f$ to $V_i$, inflated to $V$ by
the projection $e_i$.  The Artinian decomposition of the center is
\begin{equation}\label{eq:local-center}
  Z(f)=Z_1\times\cdots\times Z_r,
  \qquad Z_i=e_iZ(f)=\C e_i\oplus N_i,
\end{equation}
where $N_i$ is the maximal ideal of the local algebra $Z_i$.  Thus
\[
  \rad Z(f)=N_1\oplus\cdots\oplus N_r.
\]

\subsection{The Jacobian fibre}

We call $\cF_f^{(d)}:=\{h\in\Sym^d(V^*):J(h)=J(f)\}$ the \emph{Jacobian fibre} through $f$.  In \cite{Hwang},   the Jacobian fibre was described using the symmetrizer group $Z(f)^{\times}$. We record this result  in a form suited to the equivariant arguments below. For $z\in Z(f)$ define $f_z\in\Sym^d(V^*)$ by
\begin{equation}\label{eq:fz}
  \Theta_{f_z}(v_1,\ldots,v_d)
  =
  \Theta_f(zv_1,v_2,\ldots,v_d).
\end{equation}
The center identity makes the right-hand side symmetric, so $f_z$ is
well defined.

\begin{proposition}[Jacobian fibre]\label{prop:Jacobian-fibre}
Let $f\in\Sym^d(V^*)$ be nondegenerate, with $d\geq3$.
\begin{enumerate}[label=\textup{(\roman*)},leftmargin=0.9cm]
\item For every $h\in\Sym^d(V^*)$ satisfying
$J(h)\subseteq J(f)$, there is a unique $z\in Z(f)$ such that $h=f_z$. Hence the assignment $z\mapsto f_z$ gives a vector-space
isomorphism
\begin{equation}\label{eq:center-linear-fibre}
  Z(f)\xrightarrow{\sim}
  \{h\in\Sym^d(V^*):J(h)\subseteq J(f)\}.
\end{equation}
\item $J(f_z)=J(f)$ if and only if $z\in Z(f)^\times$.
\smallskip

\item If $J(h)=J(f)$, then $Z(h)=Z(f)$. Consequently,
\begin{equation}\label{eq:homogeneous-fibre}
\begin{aligned}
  \cF_f^{(d)}&=\{f_z:z\in Z(f)^\times\}\\
  &=\left\{
     \sum_{i=1}^r\bigl(\lambda_i f_i+f_{n_i}\bigr):
     \lambda_i\in\C^\times,\ n_i\in N_i
    \right\}.
\end{aligned}
\end{equation}
\end{enumerate}

\end{proposition}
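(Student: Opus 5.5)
The plan is to identify $J(h)$ with an image of $V$ and reduce everything to linear algebra in $\End(V)$. By \eqref{eq:directional-polarization}, $D_vh$ is the $(d-1)$-form $x\mapsto d\,\Theta_h(v,x,\ldots,x)$. Since $f$ is nondegenerate, the map $\phi_f:V\to\Sym^{d-1}(V^*)$, $v\mapsto D_vf$, is injective, and $J(f)=\phi_f(V)$ has dimension $\dim V$.

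\emph{Part (i).} Suppose $J(h)\subseteq J(f)$. Then for each $v$ there is a unique $zv\in V$ with $D_vh=D_{zv}f$, and uniqueness makes $z=\phi_f^{-1}\circ\phi_h$ linear, so $z\in\End(V)$. The identity $D_vh=D_{zv}f$ says $\Theta_h(v,x,\ldots,x)=\Theta_f(zv,x,\ldots,x)$ for all $x$. Polarizing in $x$ (both sides are symmetric in the last $d-1$ slots) gives
\[
\Theta_h(v_1,\ldots,v_d)=\Theta_f(zv_1,v_2,\ldots,v_d).
\]
The left side is symmetric, so the right side is symmetric in $v_1,v_2$. That is exactly \eqref{eq:center-polar}, so $z\in Z(f)$ and $h=f_z$.

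Conversely, for $z\in Z(f)$ the same computation gives $D_vf_z=D_{zv}f\in J(f)$. Uniqueness of $z$ follows from injectivity of $\phi_f$: if $f_z=f_{z'}$ then $D_{(z-z')v}f=0$ for all $v$, so $z=z'$. Linearity of $z\mapsto f_z$ is clear, which gives the isomorphism \eqref{eq:center-linear-fibre}.

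\emph{Part (ii).} From $D_vf_z=D_{zv}f$ we get $J(f_z)=\phi_f(zV)$. Since $\phi_f$ is injective, this equals $J(f)$ if and only if $zV=V$, that is, $z$ is invertible in $\End(V)$. To conclude we need $z\in Z(f)^\times$. Because $Z(f)$ is a finite-dimensional subalgebra of $\End(V)$, an element invertible in $\End(V)$ has its inverse equal to a polynomial in $z$ (Cayley--Hamilton), so the inverse lies in $Z(f)$. I should check that $Z(f)$ is closed under composition and contains the identity; this is part of Proposition~\ref{prop:HLYZ-center}(i).

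\emph{Part (iii).} This is the main obstacle: showing $Z(f_u)=Z(f)$ for $u\in Z(f)^\times$. One inclusion uses commutativity. If $X\in Z(f)$, then
\[
\Theta_f(uXv_1,v_2,\ldots)=\Theta_f(Xuv_1,v_2,\ldots)=\Theta_f(uv_1,Xv_2,\ldots),
\]
where the last step moves $X$ across using the center identity. This shows $X\in Z(f_u)$, so $Z(f)\subseteq Z(f_u)$.

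For the reverse inclusion, first note that $f_u$ is nondegenerate since $u$ is invertible. Next, $f=(f_u)_{u^{-1}}$: indeed $\Theta_{f_u}(u^{-1}v_1,\ldots)=\Theta_f(v_1,\ldots)$, and $u^{-1}\in Z(f)\subseteq Z(f_u)$. Applying the first inclusion with the roles of $f$ and $f_u$ swapped gives $Z(f_u)\subseteq Z(f)$.

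So if $J(h)=J(f)$, then $h=f_u$ with $u\in Z(f)^\times$ and $Z(h)=Z(f)$. Hence $\cF_f^{(d)}=\{f_z:z\in Z(f)^\times\}$. Writing $z=\sum_i(\lambda_ie_i+n_i)$ via \eqref{eq:local-center}, $z$ is a unit exactly when all $\lambda_i\ne0$, because each $N_i$ is nilpotent. Finally $f_{e_i}=f_i$, and $z\mapsto f_z$ is linear, which gives the displayed description \eqref{eq:homogeneous-fibre}.
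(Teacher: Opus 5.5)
Your proof is correct. Parts (i) and (ii) follow essentially the paper's route. In (i) you polarize $D_vh=D_{zv}f$ all the way to $\Theta_h(v_1,\ldots,v_d)=\Theta_f(zv_1,v_2,\ldots,v_d)$, which gives $z\in Z(f)$ and $h=f_z$ in one step. The paper instead uses equality of mixed partials $D_uD_vh=D_vD_uh$ to get $z\in Z(f)$, and then shows $h=f_z$ separately via $D_v(h-f_z)=0$. In (ii) your Cayley--Hamilton remark supplies a justification the paper leaves implicit: an element of $Z(f)$ invertible in $\End(V)$ is invertible in $Z(f)$.

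The genuine difference is in (iii). The paper proves directly that $X\in Z(f_z)$ if and only if $zX\in Z(f)$, by moving $z$ across the first two slots. It then uses that $z$ is a unit of the unital subalgebra $Z(f)$, which needs only closure of $Z(f)$ under composition. You instead prove $Z(f)\subseteq Z(f_u)$ using commutativity of $Z(f)$, and obtain the reverse inclusion from $f=(f_u)_{u^{-1}}$. That route forces two extra checks, both of which you carry out correctly: $f_u$ must be nondegenerate, so that Harrison's commutativity applies to $Z(f_u)$, and $u^{-1}$ must lie in $Z(f_u)$. Your argument is a little longer but makes explicit that sharing a Jacobian is a symmetric relation among nondegenerate forms; the paper's is a one-line equivalence.

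Incidentally, since $d\geq3$, your first inclusion does not need commutativity at all. Park $u$ in the third slot: $\Theta_f(uXv_1,v_2,v_3,\ldots)=\Theta_f(Xv_1,v_2,uv_3,\ldots)=\Theta_f(v_1,Xv_2,uv_3,\ldots)=\Theta_f(uv_1,Xv_2,v_3,\ldots)$. With this, your reverse-inclusion step no longer requires nondegeneracy of $f_u$.
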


\begin{proof}
For a form $q\in\Sym^d(V^*)$, let
\begin{equation}\label{eq:nondegeneracyiso}
  \delta_q:V\longrightarrow J(q),\qquad
  v\longmapsto D_vq.
\end{equation}
Since $f$ is nondegenerate, $\delta_f$ is an isomorphism.

We first prove (i).  Let $h\in\Sym^d(V^*)$ satisfy
$J(h)\subseteq J(f)$.  Since $\delta_f$ is an isomorphism, there is a
unique linear map $z\in\End(V)$ such that
\begin{equation}\label{eq:Jacobian-fibre-z}
  D_vh=D_{zv}f\qquad(v\in V).
\end{equation}
We claim that $z\in Z(f)$.  For $u,v\in V$, equality of mixed
directional derivatives gives
\[
  D_uD_{zv}f
  =D_uD_vh
  =D_vD_uh
  =D_vD_{zu}f.
\]
Using \eqref{eq:polarization-2}, we obtain
\[ \Theta_f(zv,u,x,\ldots,x)
  =
  \Theta_f(zu,v,x,\ldots,x),\qquad \forall\ x\in V.\]
Polarizing the remaining $d-2$ variables through \eqref{eq:polarization}, we obtain
\[
  \Theta_f(zv,u,v_3,\ldots,v_d)
  =
  \Theta_f(zu,v,v_3,\ldots,v_d),\qquad \forall\ u,v,v_3,\ldots,v_d\in V.
\]
Since $\Theta_f$ is symmetric,
this is
\[
  \Theta_f(zv,u,v_3,\ldots,v_d)
  =
  \Theta_f(v,zu,v_3,\ldots,v_d),
\]
which gives $z\in Z(f)$. For $z\in Z(f)$, the definition of $f_z$ gives
\begin{equation}\label{eq:derivative-fz}
  D_vf_z=D_{zv}f\qquad(v\in V).
\end{equation}
Indeed, evaluating at $x\in V$,
\[
  D_vf_z(x)
  =d\,\Theta_{f_z}(v,x,\ldots,x)
  =d\,\Theta_f(zv,x,\ldots,x)
  =D_{zv}f(x).
\]
Thus \eqref{eq:Jacobian-fibre-z} and
\eqref{eq:derivative-fz} show that $D_v(h-f_z)=0$ for every $v\in V$. Hence $h-f_z$ is constant.  Since both $h$ and $f_z$ are homogeneous
of degree $d>0$, this constant is zero, and $h=f_z$.
This proves surjectivity of \eqref{eq:center-linear-fibre}.  It is also injective:
if $f_z=0$, then \eqref{eq:derivative-fz} gives
$D_{zv}f=0$ for every $v\in V$.  Since $f$ is nondegenerate,
$zv=0$ for every $v$, and so $z=0$.  Finally, the map $z\mapsto f_z$ is also linear.  This proves (i).

We next prove (ii).  By \eqref{eq:derivative-fz},
\[
  J(f_z)
  =\{D_{zv}f:v\in V\}
  =\delta_f(zV).
\]
Since $\delta_f:V\to J(f)$ is an isomorphism,
\[
  J(f_z)=J(f)
  \quad\Longleftrightarrow\quad
  zV=V
  \quad\Longleftrightarrow\quad
  z\in\GL(V).
\]
For $z\in Z(f)$, invertibility as an endomorphism of $V$ is
equivalent to invertibility in the algebra $Z(f)$. Therefore
\[
  J(f_z)=J(f)
  \quad\Longleftrightarrow\quad
  z\in Z(f)^\times.
\]

Finally, suppose that $J(h)=J(f)$.  By (i) and (ii), we may write
\[
  h=f_z
  \qquad\text{for some }z\in Z(f)^\times.
\]
Let $X\in\End(V)$.  Using the definition of $f_z$, we have
\[
\begin{aligned}
  X\in Z(h)
  &\Longleftrightarrow
  \Theta_h(Xv_1,v_2,v_3,\ldots,v_d)
  =
  \Theta_h(v_1,Xv_2,v_3,\ldots,v_d)
  \\
  &\Longleftrightarrow
  \Theta_f(zXv_1,v_2,v_3,\ldots,v_d)
  =
  \Theta_f(zv_1,Xv_2,v_3,\ldots,v_d).
\end{aligned}
\]
Since $z\in Z(f)$, $\Theta_f(zv_1,Xv_2,v_3,\ldots,v_d)
  =
  \Theta_f(v_1,zXv_2,v_3,\ldots,v_d)$, thus
  \[X\in Z(h)
  \quad\Longleftrightarrow\quad
  zX\in Z(f).
\]
As $z$ is a unit of $Z(f)$, this is equivalent to $X\in Z(f)$, which proves $Z(h)=Z(f)$. It remains to describe the Jacobian fibre.  By the Artinian decomposition
\eqref{eq:local-center}, every $z\in Z(f)$ has a unique expression
\[
  z=\sum_{i=1}^r(\lambda_i e_i+n_i),
  \qquad
  \lambda_i\in\C,\quad n_i\in N_i.
\]
Since $Z_i=\C e_i\oplus N_i$ is local with maximal ideal $N_i$, the
element $\lambda_i e_i+n_i$ is a unit of $Z_i$ if and only if
$\lambda_i\ne0$.  Hence
\[
  z\in Z(f)^\times
  \quad\Longleftrightarrow\quad
  \lambda_i\ne0\qquad(1\leq i\leq r).
\]
By the linearity of $z\mapsto f_z$ and
$f_{e_i}=f_i$, we have $f_z
  =
  \sum_{i=1}^r
  \bigl(\lambda_i f_i+f_{n_i}\bigr)$.
\end{proof}

\subsection{Invariant Jacobian}

We now add the group action. Let $G$ be a group acting linearly on $V$, and let it act on
$\Sym(V^*)$ by
\[
  (g\cdot h)(x)=h(g^{-1}x).
\]
We record the basic equivariance properties of directional
differentiation that will be used below. For $g\in G$, $v,x\in V$, and $h\in\Sym(V^*)$, the chain rule gives
\[
\begin{aligned}
  D_{gv}(g\cdot h)(x)
  &=
  \left.\frac{d}{dt}\right|_{t=0}
  h\bigl(g^{-1}(x+tgv)\bigr) \\
  &=
  \left.\frac{d}{dt}\right|_{t=0}
  h(g^{-1}x+tv) \\
  &=
  D_vh(g^{-1}x)
  =
  (g\cdot D_vh)(x).
\end{aligned}
\]
Hence
\begin{equation}\label{eq:directional-covariance}
  D_{gv}(g\cdot h)=g\cdot(D_vh),
\end{equation}
and therefore
\begin{equation}\label{eq:J-covariance}
  J(g\cdot f)=g\cdot J(f).
\end{equation}
In particular, if $\C f$ is $G$-invariant, then so is $J(f)$. For a subspace
$U\subseteq\Sym^m(V^*)$, put
\[
  U^\perp
  :=
  \{v\in V:D_vu=0\text{ for every }u\in U\}.
\]
Following Kempf \cite[Lemma~2 and Corollary~3]{Kempf}, for $d\geq3$,  one has $J(f)^\perp=f^\perp$. Hence, if $J(f)$ is $G$-invariant, then $f^\perp$ is a
$G$-submodule of $V$.  In particular:

\begin{lemma}\label{lem:essential-kernel}
 If $V$ is irreducible, $f\ne0$ and $J(f)$ is $G$-invariant, then $f$ is nondegenerate.
\end{lemma}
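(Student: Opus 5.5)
The plan is to show that $f^\perp$ is a $G$-submodule of $V$ and then use irreducibility to force $f^\perp=0$. The second point of this plan is trivial once the first is established. Since $f\neq 0$ and $d\geq 1$, some directional derivative $D_vf$ is nonzero. Hence $f^\perp\neq V$, and irreducibility leaves only $f^\perp=0$. By \eqref{eq:nondegenerate} this is exactly nondegeneracy.

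For the invariance of $f^\perp$, I would use Kempf's identity $J(f)^\perp=f^\perp$ for $d\geq3$, quoted just before the statement, together with the covariance \eqref{eq:directional-covariance}. Take $v\in J(f)^\perp$, $g\in G$ and $u\in J(f)$. Since $J(f)$ is $G$-invariant, $g^{-1}\cdot u\in J(f)$. Applying \eqref{eq:directional-covariance} to $h=g^{-1}\cdot u$ gives
\[
  D_{gv}u=D_{gv}\bigl(g\cdot(g^{-1}\cdot u)\bigr)=g\cdot D_v(g^{-1}\cdot u)=0 .
\]
Hence $gv\in J(f)^\perp$. So $J(f)^\perp$ is $G$-stable, and therefore so is $f^\perp$.

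The only nontrivial input is Kempf's equality $J(f)^\perp=f^\perp$, and in particular the inclusion $J(f)^\perp\subseteq f^\perp$. The reverse inclusion is immediate because derivatives commute: if $D_vf=0$ then $D_vD_wf=D_wD_vf=0$ for all $w$. If I had to verify the harder inclusion myself, I would argue as follows. Suppose $D_wD_vf=0$ for all $w$. Then $D_vf$ is a form of degree $d-1\geq 1$ all of whose first partial derivatives vanish, so $D_vf$ is constant. Being homogeneous of positive degree, it is zero. So the inclusion reduces to this one-line observation. I therefore expect no real obstacle; the main care is just in correctly handling the inverse in the action $(g\cdot h)(x)=h(g^{-1}x)$.
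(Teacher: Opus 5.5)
Your proposal is correct and follows the paper's route. The paper gets $G$-stability of $f^\perp$ from Kempf's identity $J(f)^\perp=f^\perp$ together with the covariance of directional derivatives, and then applies irreducibility with $f\neq0$; you do the same. The one difference is that the paper cites Kempf for $J(f)^\perp=f^\perp$, while you prove it directly by commuting derivatives and using homogeneity. Your direct argument is valid and in fact only needs $d\geq2$.
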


 We shall also use the following equivariance of differentiation,
which is used in \cite{Xi} to study the $G$-module structure of
invariant Jacobians.
\begin{lemma}\label{lem:Xi-differentiation}
The map
\begin{equation}\label{eq:Xi-map}
  \Sym(V^*)\otimes V\longrightarrow\Sym(V^*),
  \qquad
  h\otimes v\longmapsto D_vh,
\end{equation}
is a $G$-homomorphism.  If $\C f$ affords the character $\chi$, then
$J(f)$ is a quotient of $\chi\otimes V$. In particular, if $f$ is nondegenerate,
then
\[
  J(f)\cong\chi\otimes V.
\]
\end{lemma}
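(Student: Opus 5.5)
The plan is to verify equivariance directly from \eqref{eq:directional-covariance}, and then read off the remaining claims by restricting the map \eqref{eq:Xi-map} to $\C f\otimes V$.

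\emph{Equivariance.} We let $G$ act on $\Sym(V^*)\otimes V$ diagonally, $g\cdot(h\otimes v)=(g\cdot h)\otimes gv$. The map $h\otimes v\mapsto D_vh$ is bilinear in $(h,v)$, so it descends to a well-defined linear map on the tensor product. By \eqref{eq:directional-covariance}, $g\cdot(h\otimes v)$ is sent to $D_{gv}(g\cdot h)=g\cdot D_vh$. Hence the map commutes with the $G$-action and is a $G$-homomorphism.

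\emph{Quotient statement.} Suppose $\C f$ is $G$-invariant, affording a character $\chi$, so that $g\cdot f=\chi(g)f$. Then $\C f\otimes V$ is a $G$-submodule of $\Sym(V^*)\otimes V$, isomorphic to $\chi\otimes V$. Restricting the homomorphism to this submodule gives a $G$-map
\[
  \chi\otimes V\longrightarrow \Sym^{d-1}(V^*),\qquad f\otimes v\longmapsto D_vf .
\]
Its image is exactly $J(f)$ by \eqref{eq:intro-J}, so $J(f)$ is a $G$-quotient of $\chi\otimes V$.

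\emph{Nondegenerate case.} The kernel of this restricted map is $\{f\otimes v: D_vf=0\}=f\otimes f^\perp$. When $f$ is nondegenerate, $f^\perp=0$, so the map is injective. It is therefore a $G$-isomorphism $\chi\otimes V\cong J(f)$; equivalently, $\delta_f$ from \eqref{eq:nondegeneracyiso} becomes equivariant once it is twisted by $\chi$.

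No step is a genuine obstacle. The only point needing care is to use the diagonal action on the tensor product, so that \eqref{eq:directional-covariance} applies verbatim.
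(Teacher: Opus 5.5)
Your proposal is correct and follows the paper's proof: equivariance comes directly from \eqref{eq:directional-covariance}, and the restriction to $\C f\otimes V\cong\chi\otimes V$ surjects onto $J(f)$ with kernel $f\otimes f^\perp$. That kernel vanishes exactly when $f$ is nondegenerate.
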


\begin{proof}
The first assertion is exactly
\eqref{eq:directional-covariance}.  Restricting
\eqref{eq:Xi-map} to $\C f\otimes V$ gives a surjective
$G$-homomorphism
\[
  \C f\otimes V\longrightarrow J(f).
\]
Its kernel is naturally identified with $f^\perp$.  The assertions
follow.
\end{proof}

\subsection{Action on the center and the canonical blocks}

If $J(f)$ is invariant under $G$, Kempf shows that \(G\) acts on \(Z(f)\) by conjugation,  cf. \cite[Lemma~7]{Kempf}. Combining this with the associative-algebra structure of \(Z(f)\), we will show that  if $V$ is irreducible, then $G$ acts on  the primitive idempotents, and hence on the canonical blocks  $V_1,\dots,V_r$, transitively, which leads to the imprimitivity of \(V\) when $r>1$.

\begin{lemma}\label{lem:center-covariance}
For $g\in G$ and $f\in\Sym^d(V^*)$,
\begin{equation}\label{eq:center-covariance}
  Z(g\cdot f)=gZ(f)g^{-1}.
\end{equation}
Moreover, for $z\in Z(f)$,
\begin{equation}\label{eq:fz-covariance}
  g\cdot f_z
  =
  (g\cdot f)_{gzg^{-1}}.
\end{equation}
\end{lemma}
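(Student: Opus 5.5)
The plan is a direct computation from the polar description \eqref{eq:center-polar} of the center, once the polarization of $g\cdot f$ is known. First I will record that
\[
  \Theta_{g\cdot f}(v_1,\ldots,v_d)=\Theta_f(g^{-1}v_1,\ldots,g^{-1}v_d)
  \qquad(v_i\in V).
\]
The right-hand side is a symmetric $d$-linear form. Its diagonal value is $\Theta_f(g^{-1}v,\ldots,g^{-1}v)=f(g^{-1}v)=(g\cdot f)(v)$. So by the uniqueness in the identification $f\leftrightarrow\Theta_f$ coming from \eqref{eq:polarization}, it equals $\Theta_{g\cdot f}$.

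For \eqref{eq:center-covariance}, take $X\in\End(V)$ and put $Y=g^{-1}Xg$. Then
\[
  \Theta_{g\cdot f}(Xv_1,v_2,\ldots,v_d)
  =\Theta_f(Y g^{-1}v_1,g^{-1}v_2,\ldots,g^{-1}v_d),
\]
and similarly with $X$ placed in the second slot. Since $g^{-1}$ is a bijection of $V$, the vectors $w_i=g^{-1}v_i$ range over all of $V^d$. Hence $X\in Z(g\cdot f)$ if and only if $\Theta_f(Yw_1,w_2,\ldots)=\Theta_f(w_1,Yw_2,\ldots)$ for all $w_i$, that is, if and only if $Y\in Z(f)$. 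This says exactly $Z(g\cdot f)=gZ(f)g^{-1}$.

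For \eqref{eq:fz-covariance}, let $z\in Z(f)$. By the first part, $gzg^{-1}\in Z(g\cdot f)$, so $(g\cdot f)_{gzg^{-1}}$ is defined by \eqref{eq:fz}. I will compare the polarizations of both sides. For the left-hand side,
\[
  \Theta_{g\cdot f_z}(v_1,\ldots,v_d)=\Theta_{f_z}(g^{-1}v_1,\ldots,g^{-1}v_d)=\Theta_f(zg^{-1}v_1,g^{-1}v_2,\ldots,g^{-1}v_d).
\]
For the right-hand side,
\[
  \Theta_{(g\cdot f)_{gzg^{-1}}}(v_1,\ldots,v_d)=\Theta_{g\cdot f}(gzg^{-1}v_1,v_2,\ldots,v_d)=\Theta_f(zg^{-1}v_1,g^{-1}v_2,\ldots,g^{-1}v_d).
\]
The two expressions agree, so the forms are equal.

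There is no real obstacle here. The only point needing care is the first step, the formula for $\Theta_{g\cdot f}$, and it rests solely on the uniqueness of the symmetric multilinear form attached to a degree-$d$ form. Nondegeneracy is not needed anywhere in the argument.
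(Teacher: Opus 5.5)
Your proof is correct and follows the paper's route: both use $\Theta_{g\cdot f}(v_1,\ldots,v_d)=\Theta_f(g^{-1}v_1,\ldots,g^{-1}v_d)$ to reduce membership in $Z(g\cdot f)$ to $g^{-1}Xg\in Z(f)$, and then compare the polarizations of $g\cdot f_z$ and $(g\cdot f)_{gzg^{-1}}$. You also justify the polarization formula by uniqueness and check that $gzg^{-1}\in Z(g\cdot f)$, so that $(g\cdot f)_{gzg^{-1}}$ is defined; the paper leaves both points implicit.
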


\begin{proof}
Polarization is compatible with linear changes of variables:
\begin{equation*}
  \Theta_{g\cdot f}(v_1,\ldots,v_d)
  =
  \Theta_f(g^{-1}v_1,\ldots,g^{-1}v_d).
\end{equation*}
The defining identity for the center then gives
\[
  X\in Z(g\cdot f)
  \quad\Longleftrightarrow\quad
  g^{-1}Xg\in Z(f),
\]
which proves \eqref{eq:center-covariance}.  Similarly,
\[
\begin{aligned}
  \Theta_{g\cdot f_z}(v_1,\ldots,v_d)
  &=
  \Theta_f(zg^{-1}v_1,g^{-1}v_2,\ldots,g^{-1}v_d) \\
  &=
  \Theta_{g\cdot f}
  \bigl(gzg^{-1}v_1,v_2,\ldots,v_d\bigr),
\end{aligned}
\]
which proves \eqref{eq:fz-covariance}.
\end{proof}

Assume from now on that $f$ is nondegenerate and that $J(f)$ is
$G$-invariant.  By \eqref{eq:J-covariance},
\[
  J(g\cdot f)=J(f)
  \qquad(g\in G).
\]
Proposition~\ref{prop:Jacobian-fibre} therefore gives a unique
$a_g\in Z(f)^\times$ such that
\begin{equation}\label{eq:unit-ag}
  g\cdot f=f_{a_g}.
\end{equation}
The same proposition gives $Z(g\cdot f)=Z(f)$,  so Lemma~\ref{lem:center-covariance} gives 
\begin{equation}\label{eq:center-conjugation}
  gZ(f)g^{-1}=Z(f)
  \qquad(g\in G).
\end{equation}
Thus $G$ acts on $Z(f)$ by conjugation:
\[
  \alpha_g:Z(f)\longrightarrow Z(f),
  \qquad
  \alpha_g(z)=gzg^{-1}.
\]
Since each $\alpha_g$ is an algebra automorphism of $Z(f)$, it
preserves the Jacobson radical and permutes the primitive
idempotents.  Let $1=e_1+\cdots+e_r$ be the decomposition of the identity into primitive orthogonal
idempotents, and put $V_i=e_iV$. Then
\begin{equation}\label{eq:block-action}
  gV_i
  =
  \alpha_g(e_i)V.
\end{equation}
Thus $G$ permutes the canonical blocks of $f$.  Moreover,
\eqref{eq:fz-covariance} gives
\begin{equation}\label{eq:summand-action}
  g\cdot f_{e_i}
  =
  (g\cdot f)_{\alpha_g(e_i)}.
\end{equation}

The following argument parallels Kempf's radical argument in
\cite[Theorem~12]{Kempf}, but uses the associative-algebra structure
of $Z(f)$ and hence avoids the connectedness assumption.

\begin{lemma}\label{lem:radical-proper}
Assume that $f$ is nondegenerate,  $J(f)$ is $G$-invariant,
and put
\[
  N=\rad Z(f).
\]
Then $NV$ is a proper $G$-submodule of $V$. If, in addition, $V$ is irreducible, then
\begin{equation*}
  \rad Z(f)=0.
\end{equation*}
In this case $G$ acts transitively on the primitive idempotents of
$Z(f)$ and  the canonical blocks $V_1,\dots,V_r$.
\end{lemma}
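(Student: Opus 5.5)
First I show that $NV$ is a $G$-submodule. Since each $\alpha_g$ is an algebra automorphism of $Z(f)$, it preserves the Jacobson radical, so $gNg^{-1}=N$. Then
\[
  g(NV)=gNg^{-1}gV=NV .
\]
Next I show that $NV\ne V$. The radical $N$ of the finite-dimensional algebra $Z(f)$ is nilpotent, say $N^k=0$ with $k\geq1$. If $NV=V$, iterating gives $V=N^kV=0$. This contradicts $V\ne0$, which holds in the case of interest (if $V=0$ there is nothing to prove). So $NV$ is a proper $G$-submodule.

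Now assume $V$ is irreducible. Then $NV=0$. Since every element of $N$ is an endomorphism of $V$ killing all of $V$, each element of $N$ is the zero endomorphism. Because $Z(f)\subseteq\End(V)$, this gives $N=0$. Thus $Z(f)=\C e_1\times\cdots\times\C e_r$ is semisimple, spanned by its primitive idempotents.

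For transitivity, I let $O\subseteq\{e_1,\dots,e_r\}$ be a $G$-orbit under $\alpha$ and put
\[
  e_O=\sum_{e\in O}e .
\]
This idempotent satisfies $geg^{-1}=e$ for $e=e_O$, so $e_OV$ is a nonzero $G$-submodule, using \eqref{eq:block-action}. By irreducibility $e_OV=V$. Since $e_O$ acts as the identity on $V$ and $Z(f)\subseteq\End(V)$, we get $e_O=1$, so $O$ contains all primitive idempotents. By \eqref{eq:block-action}, transitivity on the idempotents transfers to transitivity on the blocks $V_i=e_iV$. I use here that $e_i\mapsto e_iV$ is injective: distinct orthogonal nonzero idempotents have distinct, nonzero images.

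The only point I expect to need care is that elements of $Z(f)$ are literally endomorphisms of $V$, so that vanishing on $V$ forces vanishing in the algebra. This is immediate from the definition \eqref{eq:center-polar}. Nondegeneracy is not needed for this step; it is used only to have the action $\alpha_g$, via Proposition~\ref{prop:Jacobian-fibre}.
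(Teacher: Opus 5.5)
Your proof is correct and follows the paper's argument essentially step for step. Automorphism-invariance of the radical makes $NV$ $G$-stable, nilpotency of $N$ makes $NV$ proper, irreducibility then forces $N=0$, and the $G$-fixed orbit idempotents $e_{\mathcal O}$ give transitivity. Your deduction $e_{\mathcal O}=1$ just spells out the paper's final step a little more explicitly.
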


\begin{proof}
The Jacobson radical is preserved by every algebra automorphism of
$Z(f)$, so
\[
  \alpha_g(N)=N
  \qquad(g\in G).
\]
Hence $NV$ is $G$-stable: if $n\in N$, $v\in V$, and $g\in G$, then
\[
  g(nv)
  =
  (gng^{-1})(gv)
  =
  \alpha_g(n)(gv)
  \in NV.
\]
Since $Z(f)$ is finite-dimensional, $N$ is nilpotent.  Thus
$N^m=0$ for some $m\geq1$. Therefore $NV$ is proper. Suppose now that $V$ is irreducible.  Then $NV=0$, which implies $N=0$.  Thus $Z(f)$ is a
finite-dimensional commutative semisimple algebra, so
\[
  Z(f)
  =
  \C e_1\times \cdots\times \C e_r.
\]
 Since $\alpha$ permutes the
primitive idempotents, for every $G$-orbit $\mathcal O$ on
$\{e_1,\ldots,e_r\}$ the idempotent $e_{\mathcal O}
  :=
  \sum_{e_i\in\mathcal O}e_i$
is fixed by $G$.  Hence
\[
  e_{\mathcal O}V
  =
  \bigoplus_{e_i\in\mathcal O}e_iV
\]
is a $G$-submodule of $V$.  By irreducibility, it is either $0$ or
$V$.  Since every $e_iV$ is nonzero, the first possibility is
excluded.  Hence every orbit is the whole set of primitive
idempotents, and the action is transitive.
\end{proof}

\section{Invariant Jacobian and imprimitivity of a module}
\label{sec:imprimitivity}

Let $G$ be a group and let $V$ be a finite-dimensional irreducible
complex $G$-module.  Recall that a \emph{system of imprimitivity} is
a decomposition
\begin{equation}\label{eq:imprimitivity-system}
  V=V_1\oplus\cdots\oplus V_r 
  \qquad (r>1)
\end{equation}
into nonzero subspaces which are permuted transitively by $G$.
The module is \emph{primitive} if it admits no such decomposition.
If $W=V_i$ is one of the blocks and  $K=\Stab_G(W)$, then the transitive action on the blocks identifies them with $G/K$, so
\[
  [G:K]=r
  \qquad\text{and}\qquad
  V\cong\Ind_K^G W.
\]
See \cite[Definition~5.7 and Theorem~5.9]{Isaacs}.

\subsection{Structure of invariant Jacobians}
\label{subsec:jacobian-structure}
\label{subsec:finite-fibres}

In \cite[Section~3.5]{Xi}, Xi asks, for a form $f$ with
$G$-invariant Jacobian $J(f)$, three related questions:
\begin{enumerate}[label=\textup{(\roman*)},leftmargin=0.9cm]
\item describe the $G$-module structure of $J(f)$;
\item determine whether there is a form $h$ with $J(h)=J(f)$ such
      that $\C h$ is $G$-invariant;
\item describe all forms having the same Jacobian as $f$.
\end{enumerate}
The center-theoretic results of Section~\ref{sec:center} give a
uniform answer to these questions for arbitrary finite-dimensional
irreducible complex representations. Let
\[
  V=V_1\oplus\cdots\oplus V_r,
  \qquad
  f=f_1+\cdots+f_r
\]
be the canonical decomposition determined by the primitive
idempotents $e_1,\ldots,e_r$ of $Z(f)$, where
\[
  V_i=e_iV,
  \qquad
  f_i=f_{e_i}.
\]
If $J(f)$ is $G$-invariant, then Lemma~\ref{lem:essential-kernel}
shows that $f$ is nondegenerate, while
Lemma~\ref{lem:radical-proper} gives
\[
  Z(f)=\C e_1\times\cdots\times\C e_r,
\]
and that $G$ permutes the $e_i$ and the $V_i$,
transitively.  Thus, if $r>1$, the canonical decomposition is a
system of imprimitivity for $V$. Fix one canonical block $W=V_1$ and put $ K=\Stab_G(W)$.

\begin{theorem}\label{thm:jacobian-structure}
Let $G$ be a group, $V$ be a finite-dimensional irreducible
complex $G$-module, and let
\[
  0\ne f\in\Sym^d(V^*),
  \qquad d\geq3,
\]
have $G$-invariant Jacobian $J(f)$.
With the notation above, we have:
\begin{enumerate}[label=\textup{(\roman*)},leftmargin=0.9cm]

\item $\cF_f^{(d)} =
  \left\{
    \sum_{i=1}^r c_i f_i:
    c_i\in\C^\times
  \right\}$.

\smallskip

\item
The stabilizer $K$ preserves the line $\C f_1$. Moreover, if
\[
  k\cdot f_1=\psi(k)f_1
  \qquad(k\in K)
\]
for the corresponding linear character $\psi$ of $K$, then
\[
  J(f)\cong\Ind_K^G(\psi\otimes W).
\]

\smallskip

\item
The fibre $\cF_f^{(d)}$ contains a relative invariant if and only if
$\psi$ extends to a character $\chi$ of $G$.  If
$t_1=1,\ldots,t_r$ are representatives of $G/K$, then one such form
is
\[
  f_\chi
  =
  \sum_{i=1}^r
  \chi(t_i)^{-1}t_i\cdot f_1.
\]

\end{enumerate}
\end{theorem}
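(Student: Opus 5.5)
Part (i) is a specialization of Proposition~\ref{prop:Jacobian-fibre}(iii). By Lemma~\ref{lem:essential-kernel}, $f$ is nondegenerate, and by Lemma~\ref{lem:radical-proper} each $N_i=0$. Hence \eqref{eq:homogeneous-fibre} collapses to $\{\sum c_if_i: c_i\in\C^\times\}$. For later use we record that $Z(f)^\times=\{\sum c_ie_i: c_i\ne0\}$.

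For (ii), take $k\in K$. Then $kV_1=V_1$, so by \eqref{eq:block-action} $\alpha_k(e_1)V=e_1V$. Two primitive idempotents of the semisimple algebra $Z(f)=\prod\C e_i$ with the same image in $V$ coincide, so $\alpha_k(e_1)=e_1$. By \eqref{eq:unit-ag}, $k\cdot f=f_{a_k}$ with $a_k=\sum c_i(k)e_i$, and \eqref{eq:summand-action} then gives
\[
  k\cdot f_1=(k\cdot f)_{e_1}=f_{a_ke_1}=c_1(k)f_1 .
\]
So $\C f_1$ is $K$-stable, with character $\psi=c_1$. (Here we used $(f_{a})_{e}=f_{ae}$, which is immediate from \eqref{eq:fz}.)

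For the module structure, first note that $D_vf_i=D_{e_iv}f$ by \eqref{eq:derivative-fz}. Hence $J(f)=\bigoplus_i J(f_i)$, where $J(f_i)=\delta_f(V_i)$ and the sum is direct because $\delta_f$ is an isomorphism. We also have $g\cdot J(f_1)=J(g\cdot f_1)$, and $g\cdot f_1$ is a nonzero multiple of $f_{\alpha_g(e_1)}$, so $g$ maps $J(f_1)$ onto $J(f_j)$ where $gV_1=V_j$. Thus the $J(f_i)$ form a system of imprimitivity for $J(f)$ with stabilizer $K$, and \cite[Theorem~5.9]{Isaacs} gives $J(f)\cong\Ind_K^G J(f_1)$. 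Finally, $J(f_1)$ is the Jacobian of $f_1$ restricted to $W$; this form is nondegenerate on $W$ because $f$ is, and $\C f_1$ affords $\psi$. So Lemma~\ref{lem:Xi-differentiation}, applied to the $K$-module $W$, gives $J(f_1)\cong\psi\otimes W$. The main point to check is that the map $v\mapsto D_vf_1$ restricted to $W$ is $K$-equivariant up to the twist by $\psi$, and this follows from \eqref{eq:directional-covariance}.

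For (iii), suppose first that $h=\sum c_if_i$ lies in the fibre and $\C h$ affords a character $\chi$ of $G$. Then $Z(h)=Z(f)$, and the decomposition of $h$ has summands $h_{e_i}=c_if_i$. For $k\in K$, the same computation as in (ii) gives $k\cdot(c_1f_1)=(k\cdot h)_{e_1}=\chi(k)c_1f_1$, so $\psi=\chi|_K$. Conversely, suppose $\chi$ extends $\psi$, and define $f_\chi$ as in the statement. The terms $t_i\cdot f_1$ are nonzero multiples of the distinct $f_j$, so $f_\chi$ lies in the fibre by (i). For $g\in G$, write $gt_i=t_{\sigma(i)}k_i$ with $k_i\in K$. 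Then
\[
  g\cdot f_\chi=\sum_i\chi(t_i)^{-1}\psi(k_i)\,t_{\sigma(i)}\cdot f_1
  =\chi(g)\sum_i\chi(t_{\sigma(i)})^{-1}t_{\sigma(i)}\cdot f_1=\chi(g)f_\chi,
\]
using $\psi(k_i)=\chi(k_i)=\chi(t_{\sigma(i)})^{-1}\chi(g)\chi(t_i)$. This is a routine induced-character computation, and it completes the proof.
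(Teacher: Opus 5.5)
Your proposal is correct and follows the paper's proof essentially step by step. Part (i) comes from the fibre description once $\rad Z(f)=0$; part (ii) uses $\alpha_k(e_1)=e_1$, the summand-action formula, Lemma~\ref{lem:Xi-differentiation} and $J(f)=\bigoplus_i J(f_i)$ with $t_i\cdot J(f_1)=J(f_i)$; part (iii) compares $e_1$-components and then does the same coset computation as the paper. Your derivation of $J(f)=\bigoplus_i J(f_i)$ from $J(f_i)=\delta_f(V_i)$ just makes explicit a step the paper asserts without proof.
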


\begin{proof}
Part~\textup{(i)} follows immediately from
\eqref{eq:homogeneous-fibre} and $\rad Z(f)=0$.    Now let us prove  \textup{(ii)}. For any $k\in K$, we have $kV_1=V_1$.  Since $kV_1=\alpha_k(e_1)V$ (cf. \eqref{eq:block-action}) 
 and $\alpha_k$ permutes the primitive idempotents of $Z(f)$, it
follows that $\alpha_k(e_1)=e_1$.  Write
\[
  a_k=\sum_{i=1}^r\lambda_i(k)e_i.
\]
Using $k\cdot f=f_{a_k}$ and \eqref{eq:summand-action}, $k\cdot f_1
  =
  (k\cdot f)_{e_1}
  =
  (f_{a_k})_{e_1}
  =
  \lambda_1(k)f_1$. Thus $\C f_1$ is $K$-invariant. Let $\psi$ be the corresponding
character. Since  $f$ is nondegenerate, and $f=f_1+\cdots +f_r$ is a decomposition into a sum of forms in disjoint sets of variables, we know each $f_i$ is nondegenerate on $V_i$. By Lemma \ref{lem:Xi-differentiation}, $J(f_1)\cong\psi\otimes W$ as $K$-modules. Choose representatives $t_1=1,\ldots,t_r$ of $G/K$ with
$t_iW=V_i$.  By \eqref{eq:summand-action}, $t_i\cdot f_1=\mu_i f_i$. Hence
\[
  t_i\cdot J(f_1)
  =
  J(t_i\cdot f_1)
  =
  J(f_i).
\]
Since $J(f)=\bigoplus_{i=1}^rJ(f_i)$, we obtain
\[
  J(f)
  =
  \bigoplus_{i=1}^r t_i\cdot J(f_1)
  \cong
  \Ind_K^G J(f_1)
  \cong
  \Ind_K^G(\psi\otimes W).
\]
Finally, suppose that $h=\sum_{i=1}^r c_i f_i\in\cF_f^{(d)}$ is a relative invariant corresponding to $\chi$.  For $k\in K$, the block $V_1=W$ is fixed, so
comparison of the $V_1$-components in  $k\cdot h=\chi(k)h$ gives
\[
  c_1\psi(k)f_1=c_1\chi(k)f_1.
\]
Since $c_1\ne0$, we obtain $\psi=\chi|_K$. Conversely, suppose that $\chi|_K=\psi$, and set
\[
  f_\chi
  =
  \sum_{i=1}^r
  \chi(t_i)^{-1}t_i\cdot f_1.
\]
For $g\in G$, write
\[
  gt_i=t_{\sigma(i)}k_i,
  \qquad k_i\in K,
\]
where $\sigma$ is the permutation of $G/K$ induced by $g$.  Then
\[
\begin{aligned}
  g\cdot f_\chi
  &=
  \sum_i
  \chi(t_i)^{-1}t_{\sigma(i)}
  \cdot(k_i\cdot f_1)\\
  &=
  \sum_i
  \chi(t_i)^{-1}\chi(k_i)\,
  t_{\sigma(i)}\cdot f_1\\
  &=
  \chi(g)
  \sum_i
  \chi(t_{\sigma(i)})^{-1}
  t_{\sigma(i)}\cdot f_1\\
  &=
  \chi(g)f_\chi.
\end{aligned}
\]
Thus $f_\chi$ is a relative invariant corresponding to $\chi$.  Moreover,
$t_i\cdot f_1$ is a nonzero scalar multiple of $f_i$,
so all its components are nonzero.  Hence, by
\textup{(i)}, $f_\chi\in\cF_f^{(d)}$.
\end{proof}

\begin{remark}\label{rem:computability}
The objects in Theorem~\ref{thm:jacobian-structure} can be computed
explicitly.  Following \cite[Algorithm~3.12]{HLYZ}, one first computes
the center $Z(f)$ by solving the linear equations
\[
  X^{\mathsf T}H_f=H_fX.
\]
Since $Z(f)$ is semisimple in the present setting, its primitive
orthogonal idempotents can then be obtained by simultaneous
diagonalization, and these determine the canonical decomposition
\[
  V=\bigoplus_i V_i,\qquad f=\sum_i f_i.
\]
The action of $G$ on the
blocks then determines $K=\Stab_G(V_1)$ and the character $\psi$.
\end{remark}

Theorem~\ref{thm:primitive} is now an immediate consequence of Theorem~\ref{thm:jacobian-structure}.

\begin{proof}[Proof of Theorem~\ref{thm:primitive}]
By Theorem~\ref{thm:jacobian-structure}, the canonical blocks of $f$
are permuted transitively by $G$.  If there were more than one block,
they would form a nontrivial system of imprimitivity for $V$.
Since $V$ is primitive, there is therefore only one canonical block.
Thus
\[
  W=V,\qquad K=G,\qquad f_1=f.
\]
Part~\textup{(ii)} of
Theorem~\ref{thm:jacobian-structure} then shows that $\C f$ is
$G$-invariant.
\end{proof}

\begin{corollary}\label{cor:connected-semisimple}
Let $G$ be a connected semisimple complex algebraic group and let $V$
be an irreducible rational $G$-module.  If
$0\ne f\in\Sym^d(V^*)$, $d\geq3$, and $J(f)$ is $G$-invariant, then
$f$ is fixed by $G$.
\end{corollary}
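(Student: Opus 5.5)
Let $f$ be as in the corollary. The plan has two steps. First we show that the canonical decomposition of $f$ has a single block. Then we show that $\C f$ is $G$-stable with trivial character, which means $f$ itself is fixed.

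For the first step, Lemma~\ref{lem:essential-kernel} shows that $f$ is nondegenerate, and Lemma~\ref{lem:radical-proper} shows that $G$ permutes the primitive idempotents $e_1,\ldots,e_r$ of $Z(f)$ transitively. This permutation action is a homomorphism from $G$ to the finite group $S_r$. To be careful, note that the action of $G$ on the finite set $\{e_1,\ldots,e_r\}$ has the finite-index stabilizer $\Stab_G(e_1)$. This stabilizer is a closed subgroup, being the set of $g$ with $g e_1 g^{-1}=e_1$. A connected algebraic group has no proper closed subgroup of finite index, so $\Stab_G(e_1)=G$. Transitivity then forces $r=1$. This gives $W=V$, $K=G$ and $f_1=f$.

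For the second step, Theorem~\ref{thm:jacobian-structure}(ii) says that $\C f$ is $G$-invariant, so $g\cdot f=\psi(g)f$ for a character $\psi:G\to\C^\times$. It remains to show that $\psi$ is trivial. The map $\psi$ is a morphism of algebraic groups, because it is a matrix coefficient of the rational representation $\Sym^d(V^*)$ restricted to the stable line $\C f$. A connected semisimple group equals its own commutator subgroup $[G,G]$. Since $\C^\times$ is abelian, $\psi$ kills $[G,G]=G$, so $\psi=1$. Alternatively, a semisimple group has no nontrivial rational characters. Either way $g\cdot f=f$ for all $g\in G$.

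The main obstacle is the finite-index argument in the first step. We must make sure it applies to the abstract homomorphism $G\to S_r$. This is handled by observing that the kernel is the intersection of the stabilizers $\{g: ge_ig^{-1}=e_i\}$. Each of these is Zariski closed, since conjugation $G\times\End(V)\to\End(V)$ is a morphism. The kernel is therefore a closed subgroup of finite index, hence a union of finitely many cosets containing the identity component. As $G$ is connected, the kernel is all of $G$. The rest of the proof is formal.
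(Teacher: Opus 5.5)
Your proof is correct and follows essentially the paper's route: connectedness rules out a nontrivial finite permutation action on blocks, and semisimplicity (you also note perfectness) kills the character on the invariant line $\C f$. The only difference is that you apply the connectedness argument directly to the canonical idempotents, via the closed finite-index stabilizer of $e_1$ under conjugation; the paper instead first proves that $V$ is primitive, using the orbit map into a Grassmannian, and then invokes Theorem~\ref{thm:primitive}.
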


\begin{proof}
Suppose that $V$ admitted a nontrivial system of imprimitivity $V=V_1\oplus\cdots\oplus V_r$. For each $i$, the orbit map
\[
  G\longrightarrow \mathrm{Gr}(\dim V_i,V),
  \qquad
  g\longmapsto gV_i,
\]
is a morphism whose image is contained in the finite set
$\{V_1,\ldots,V_r\}$.  Since $G$ is connected, its image is a
single point.  Hence every $V_i$ is $G$-stable, contradicting the
irreducibility of $V$ when $r>1$.  Thus $V$ is primitive. By Theorem~\ref{thm:primitive}, there is a linear character $\chi$
such that
\[
  g\cdot f=\chi(g)f.
\]
Since $\C f$ is a one-dimensional submodule of the rational
$G$-module $\Sym^d(V^*)$, the character $\chi$ is algebraic.
A connected semisimple complex algebraic group has no nontrivial
algebraic characters.  Hence $\chi=1$, and $f$ is fixed by $G$.
\end{proof}

This recovers the irreducible case of
\cite[Theorem~1]{Kempf}.  In the present formulation, the two
algebraic-group hypotheses have separate roles: connectedness forces
an irreducible rational module to be primitive, while semisimplicity
eliminates the remaining character.

\subsection{Primitive modules for finite groups}

Theorem~\ref{thm:jacobian-structure} and
Theorem~\ref{thm:primitive} hold for arbitrary groups.  We now assume
that $G$ is finite and show that primitivity
exactly characterizes the finite-dimensional irreducible modules for which
\eqref{eq:main-question} holds. The key point is that every nonzero finite-dimensional module for a
finite group admits a nondegenerate invariant form in some degree.

\begin{lemma}\label{lem:finite-nondegenerate-invariant}
Let $K$ be a finite group and $W$ be a  finite-dimensional  complex $K$-module. Then $\Sym^d(W^*)$ contains a nondegenerate $K$-invariant form for some $d\geq3$.
\end{lemma}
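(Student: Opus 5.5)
Our plan is to build the form by averaging a nondegenerate form over $K$, taking care that the averaging does not destroy nondegeneracy. The simplest candidate is the power sum in an invariant framework. Fix a $K$-invariant Hermitian inner product on $W$; this exists by Weyl's unitary trick, since $K$ is finite. Choose an orthonormal basis and let $x_1,\dots,x_n\in W^*$ be the coordinate functions. A naive choice such as $\sum_j x_j^d$ is not invariant, so we average it: for $d$ large, put
\[
  F_d=\sum_{k\in K} k\cdot\Bigl(\sum_{j=1}^n x_j^d\Bigr)=\sum_{k\in K}\sum_{j}(k\cdot x_j)^d .
\]
The difficulty is that averaging can collapse $F_d$ to zero or to a degenerate form. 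We therefore argue through $f^\perp$ instead of trying to control $F_d$ directly.

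The key step is to choose the family of linear forms to be averaged so that degeneracy can be excluded. Let $S=\{k\cdot x_j: k\in K,\ 1\le j\le n\}\subseteq W^*$. This is a finite, $K$-stable set of linear forms that spans $W^*$. Partition $S$ up to scalars into lines $\ell_1,\dots,\ell_m$, and choose representatives $y_1,\dots,y_m$. The group $K$ permutes the lines, but it may multiply the representatives by roots of unity. To handle this, let $e=|K|$ and take $d$ to be a multiple of $e$ with $d\ge3$. Since $K$ is finite, every scalar $c$ with $k\cdot y_i=c\,y_{\pi(i)}$ is a root of unity of order dividing $e$; this holds because $c^{\text{ord}(k)\cdots}$ returns to $y_i$ along a cycle. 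It follows that
\[
  F=\sum_{i=1}^m y_i^{d}
\]
is exactly $K$-invariant.

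It remains to prove that $F$ is nondegenerate. This step is the main obstacle, and we handle it using a Vandermonde or linear-independence argument for powers of linear forms. Let $v\in F^\perp$. Then
\[
  D_vF=d\sum_i y_i(v)\,y_i^{d-1}=0 .
\]
The $y_i$ are pairwise non-proportional, and $d-1\ge m-1$ once $d$ is chosen large, a choice that remains compatible with $e\mid d$. The classical fact that the $(d-1)$-th powers of pairwise non-proportional linear forms are linearly independent when $d-1\ge m-1$ then applies. To prove it, restrict to a generic line or $2$-plane on which the $y_i$ stay pairwise non-proportional, which reduces the claim to a Vandermonde determinant in binary forms. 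Linear independence gives $y_i(v)=0$ for every $i$, and since the $y_i$ span $W^*$, we conclude $v=0$. Hence $F$ is a nondegenerate $K$-invariant form in $\Sym^d(W^*)$ with $d\ge3$. The case $W=0$ is trivial, or is excluded by the word ``nonzero'' in the surrounding discussion.
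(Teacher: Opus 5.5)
\textbf{There is a genuine gap in the invariance step.} You claim that every scalar $c$ in $k\cdot y_i=c\,y_{\pi(i)}$ is a root of unity of order dividing $|K|$. This is false for an arbitrary choice of representatives.

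Your cycle argument only shows that the \emph{product} of the scalars around a cycle of $\pi$ is a root of unity. Each individual scalar depends on how you normalized $y_i$ and $y_{\pi(i)}$. Here is a counterexample. Let $K=\{1,\sigma\}$ act on $W^*=\C x_1\oplus\C x_2$ by $\sigma x_1=\bar c\,x_2$ and $\sigma x_2=c\,x_1$, with $|c|=1$ and $c$ not a root of unity (e.g.\ $c=e^{i}$). This is a unitary involution, so it fits your orthonormal setup, and the unitary normalization does not help. Then $S=\{x_1,x_2,\bar c x_2,c x_1\}$ and the lines are $\C x_1,\C x_2$. With representatives $y_1=x_1$, $y_2=x_2$ (both in $S$), you get $\sigma\cdot F=c^{d}x_1^d+\bar c^{\,d}x_2^d\neq F$ for every $d$. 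Since the $y_i^d$ are linearly independent for $d\ge m-1$, invariance of $F$ is in fact equivalent to $c(k,i)^d=1$ for all $k,i$. So this is not a cosmetic issue: as written, $F$ need not be invariant for any $d$.

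\textbf{The repair} is to choose the representatives coherently along $K$-orbits of lines. In each orbit, pick one line $\ell$ and a vector $y\in\ell$. Fix coset representatives $g$ of $\Stab_K(\ell)$ and set $y_{g\ell}=g\cdot y$. If $kg=g'h$ with $h\in\Stab_K(\ell)$, then $k\cdot y_{g\ell}=\psi(h)\,y_{g'\ell}$, where $\psi$ is the linear character by which $\Stab_K(\ell)$ acts on $\ell$. Its values are $|K|$-th roots of unity, so taking $|K|$ dividing $d$ now does give invariance. Your nondegeneracy argument then goes through unchanged. Note that the Vandermonde reduction must restrict to a generic $2$-plane, not a line, since on a line all the forms become proportional.

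For comparison, the paper avoids the scalar bookkeeping altogether. It takes the product $q=\prod_j\prod_{k\in K}k\cdot\ell_j$ over full orbits, which is invariant on the nose because $K$ permutes the factors. It proves nondegeneracy by writing $q=L^aH$ with $L\nmid H$ for each linear factor $L$ and deducing $L(v)=0$ from $D_vq=0$.
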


\begin{proof}
Choose linear forms $\ell_1,\ldots,\ell_m$ spanning $W^*$ and set
\[
  q=\prod_{j=1}^m\prod_{k\in K} k\cdot\ell_j.
\]
The group $K$ permutes the factors, so $q$ is $K$-invariant.  Its
linear factors span $W^*$, since the original
$\ell_1,\ldots,\ell_m$ occur among them. Suppose that $D_vq=0$.  Let $L$ be any linear factor of $q$, and
write
\[
  q=L^a H,
  \qquad a\geq1,\qquad L\nmid H.
\]
Then $0=D_vq
   =L^{a-1}\bigl(aL(v)H+L D_v H\bigr)$, and hence
\[
  aL(v)H+L D_v H=0.
\]
If $L(v)\ne0$, this identity would imply $L\mid H$, a contradiction.
Thus $L(v)=0$ for every linear factor $L$ of $q$.  Since these
factors span $W^*$, it follows that $v=0$.  Hence $q$ is
nondegenerate. If $\deg q<3$, replace $q$ by a sufficiently large positive power $q^m$.
Since
\[
  D_v(q^m)=m q^{m-1}D_vq,
\]
$q^m$ is also nondegenerate.  This completes the proof.
\end{proof}

\begin{proof}[Proof of Theorem~\ref{thm:finite-converse}]
If $V$ is primitive, Theorem~\ref{thm:primitive} gives the required
implication in every degree. Conversely, suppose that $V$ is imprimitive.  Choose a block $W$ and
put $K=\Stab_G(W)$. Then
\[
  V=\bigoplus_{t\in G/K}tW
  \cong\Ind_K^G W.
\]
By Lemma~\ref{lem:finite-nondegenerate-invariant}, for some $d\geq3$, 
there is a nondegenerate $K$-invariant form $q\in\Sym^d(W^*)$. Choose representatives $t_1=1,t_2,\ldots,t_r$ for $G/K$ and put  $W_i=t_iW$. Transport $q$ to a form $q_i\in\Sym^d(W_i^*)$ by
\[
  q_i(w)=q(t_i^{-1}w)
  \qquad(w\in W_i),
\]
and regard $q_i$ as a form on $V$ depending only on
the $W_i$-variables.   For $c_1,\ldots,c_r\in\C^\times$, set
\begin{equation*}
  f_c=\sum_{i=1}^r c_iq_i.
\end{equation*}
Since the $q_i$ depend on disjoint blocks,
\begin{equation}\label{eq:constructed-J}
  J(f_c)=\bigoplus_{i=1}^r J(q_i).
\end{equation}
We claim that $ J(f_c)$ is $G$-invariant.  If $gW_i=W_j$, then $gt_i=t_jk$ for some $k\in K$.  Since $q$ is $K$-invariant,
\[
  g\cdot q_i
  =
  t_j\cdot(k\cdot q)
  =
  q_j.
\]
Hence, by covariance of the Jacobian,
\[
  g\cdot J(q_i)
  =
  J(g\cdot q_i)
  =
  J(q_j).
\]
Thus $G$ permutes the summands in
\eqref{eq:constructed-J}, and $J(f_c)$ is $G$-invariant.

It remains to choose the coefficients so that $\C f_c$ is not
$G$-invariant.   Since $r>1$ and $G$ acts
transitively on $\{q_1,\dots,q_r\}$, some $g\in G$ induces a nontrivial
permutation $\sigma$ of $\{1,\ldots,r\}$, such that $g\cdot q_i=q_{\sigma(i)}$. Let $c_i=2^i\ (1\leq i\leq r)$. Suppose that $\C f_c$ were $G$-invariant.  Then
\[
  g\cdot f_c=\lambda f_c
\]
for some $\lambda\in\C^\times$.  Since $q_1,\dots,q_r$ are linearly independent,
comparison of coefficients gives
\[
  c_i=\lambda c_{\sigma(i)},\qquad 1\leq i\leq r.
\]
 Since all $c_i$ are positive, $\lambda>0$.
If $m$ is the order of $\sigma$, iteration gives $\lambda^m=1$,
hence $\lambda=1$ and $\sigma$ is trivial, which is a contradiction.  Therefore $\C f_c$ is not
$G$-invariant.
\end{proof}

\section{Invariant Jacobians for Symmetric Groups}
\label{sec:Sn}

We apply the preceding results to symmetric groups. For irreducible Specht modules, the classification of systems of imprimitivity due to Djoković--Malzan  \cite{DjokovicMalzan} makes Theorem 3.1 explicit. We then consider the natural permutation module \(P_n=\mathbb C^n\).  On $P_n$, invariance of $\C f$ is
equivalent to $f$ being symmetric or alternating, whereas  invariance
of $J(f)$  gives a broader notion of
symmetry for homogeneous polynomials. Since \(P_n\) is reducible, Theorem 3.1 does not apply directly. We prove that the center of a nondegenerate form is still semisimple, and then determine all possible canonical decompositions, yielding a
classification of all forms on $P_n$ with invariant Jacobian.

\subsection{Irreducible modules}

Assume throughout this section that $n\geq5$.  The irreducible
complex $S_n$-modules are the Specht modules $S^\lambda$, indexed by
partitions $\lambda\vdash n$.  We write $\lambda'$ for the conjugate
partition.   We use the classification of imprimitive irreducible characters of
$S_n$ due to Djokovi\'c--Malzan
\cite[Theorem~1]{DjokovicMalzan}, together with the standard
restriction theorem from $S_n$ to $A_n$,  see \cite[Theorem~2.5.7]{JamesKerber} or  \cite[Section~5.1]{FultonHarris}.

\begin{theorem}\label{thm:DM}
Let $n\geq5$. 
\begin{enumerate}[label=\textup{(\roman*)},leftmargin=0.9cm]
\item The irreducible $S_n$-module $S^\lambda$ is primitive
if and only if $\lambda\ne\lambda'$. 
\smallskip

\item If $\lambda=\lambda'$, then
\begin{equation}\label{eq:An-restriction}
  \Res_{A_n}^{S_n}S^\lambda
  =
  W_\lambda^+\oplus W_\lambda^-,
\end{equation}
where $W_\lambda^+$ and $W_\lambda^-$ are nonisomorphic irreducible
$A_n$-modules.  Every odd permutation interchanges the two summands,
so \eqref{eq:An-restriction} gives a two-block system of imprimitivity.
\end{enumerate}
\end{theorem}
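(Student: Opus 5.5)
This theorem is essentially a repackaging of known results, so the plan is to reduce both parts to standard facts and to check the only genuinely structural point: a self-conjugate $S^\lambda$ admits no system of imprimitivity other than the $A_n$-splitting.

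\textbf{Part (ii).} Use the branching rule for $S_n\downarrow A_n$ (Clifford theory with the sign character). If $\lambda\ne\lambda'$, then $S^\lambda\otimes\sgn\cong S^{\lambda'}\not\cong S^\lambda$, so $\Res_{A_n}^{S_n}S^\lambda$ is irreducible. If $\lambda=\lambda'$, then $S^\lambda\otimes\sgn\cong S^\lambda$, and by Clifford's theorem, with $[S_n:A_n]=2$, the restriction splits as $W_\lambda^+\oplus W_\lambda^-$. The two summands are nonisomorphic and conjugate under any odd permutation $\tau$. The subspaces themselves are interchanged, not just their isomorphism classes: $\tau W_\lambda^+$ is an $A_n$-submodule isomorphic to the $\tau$-conjugate of $W_\lambda^+$, which is $W_\lambda^-$. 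Since $W_\lambda^+$ and $W_\lambda^-$ are nonisomorphic, the isotypic decomposition is unique, so $\tau W_\lambda^+=W_\lambda^-$. Because $S_n=A_n\cup\tau A_n$, the action on $\{W_\lambda^+,W_\lambda^-\}$ is transitive, which gives the two-block system of imprimitivity. For all of this I would simply cite \cite[Theorem~2.5.7]{JamesKerber}.

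\textbf{Part (i).} One direction follows from (ii): a self-conjugate $\lambda$ gives an imprimitive $S^\lambda$. For the converse I would invoke \cite[Theorem~1]{DjokovicMalzan}. For $n\geq5$, the only imprimitive irreducible characters of $S_n$ are those induced from $A_n$, that is, the $\chi^\lambda$ with $\lambda=\lambda'$.

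To see why this is plausible, and to recover it in outline, suppose $V=\bigoplus V_i$ is a system of imprimitivity with $r>1$ and block stabilizer $K$ of index $r$. Then $V\cong\Ind_K^{S_n}W$. For $n\geq5$, the subgroups of small index in $S_n$ are $A_n$ (index 2) and point stabilizers $S_{n-1}$ (index $n$), with a few exceptions for small $n$, and $\dim W=\dim V/r$. If $K=A_n$, Frobenius reciprocity forces $V\cong\Ind_{A_n}^{S_n}W$ with $W$ not $S_n$-stable, so $\lambda=\lambda'$. The other cases are excluded by degree and Mackey-type arguments.

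\textbf{Main obstacle.} Ruling out induction from other subgroups (intransitive, imprimitive or primitive $K$) is exactly the hard content of Djoković--Malzan. I would not reprove it; I would cite it directly, as the paper does.
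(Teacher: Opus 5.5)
Your proposal is correct and takes the same route as the paper, which gives no independent proof: it cites \cite[Theorem~1]{DjokovicMalzan} for (i) and the standard $S_n\downarrow A_n$ restriction theorem \cite[Theorem~2.5.7]{JamesKerber} for (ii). Your Clifford-theory check that an odd permutation swaps the actual subspaces $W_\lambda^\pm$, and not just their isomorphism classes, is a valid small addition. Do not treat your heuristic sketch of the converse as supporting evidence, though: a block stabilizer can have any index $r\le\dim S^\lambda$, so nothing reduces the problem to subgroups of ``small index'', and that is precisely why the citation is needed.
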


The preceding classification, together with
Theorem~\ref{thm:jacobian-structure}, gives the following description.

\begin{theorem}
\label{thm:Sn-classification}
Let $V=S^\lambda$, $0\ne f\in\Sym^d(V^*)$, with $d\geq3$, and $J(f)$ is $S_n$-invariant.
\smallskip

\begin{enumerate}[label=\textup{(\roman*)},leftmargin=0.9cm]
\item
If $\lambda\neq \lambda'$,  then 
\[
  \sigma\cdot f=\chi(\sigma)f
  \qquad(\sigma\in S_n)
\]
for some $\chi\in\{1,\sgn\}$, and
\begin{equation}\label{eq:Sn-one-block}
  J(f)\cong\chi\otimes S^\lambda\cong \begin{cases}
  S^{\lambda} & \text{if}\ \chi=1\\
  S^{\lambda'} & \text{if}\ \chi=\sgn
  \end{cases},
  \qquad
  \cF_f^{(d)}=\C^\times f.
\end{equation}

\item  If $\lambda=\lambda'$ and the canonical imprimitive
decomposition of $f$ is
\[
  \Res_{A_n}^{S_n}S^\lambda
  =
  W_\lambda^+\oplus W_\lambda^-.
\]
Let $e_+,e_-$ be the corresponding primitive idempotents of $Z(f)$,
and put $p=f_{e_+}$. Fix a transposition $\tau$, and put $q=\tau\cdot p$. Then
\begin{equation}\label{eq:Sn-two-block}
  J(f)\cong S^\lambda,
  \qquad
  \cF_f^{(d)}
  =
  \{\,a p+b q:\ a,b\in\C^\times\,\}.
\end{equation}
  Moreover, the fibre contains the invariant form $p+q$
 and the sign-relative form $p-q$.
\end{enumerate}
\end{theorem}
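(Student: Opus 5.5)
Both parts follow by feeding Theorem~\ref{thm:DM} into Theorem~\ref{thm:jacobian-structure}, Theorem~\ref{thm:primitive} and Lemma~\ref{lem:Xi-differentiation}.

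For (i), assume $\lambda\ne\lambda'$. Theorem~\ref{thm:DM}(i) says $S^\lambda$ is primitive, so Theorem~\ref{thm:primitive} shows $\C f$ is $S_n$-invariant. Hence $\sigma\cdot f=\chi(\sigma)f$ for some linear character $\chi$ of $S_n$. For $n\geq5$ the only linear characters are $1$ and $\sgn$, since the abelianization of $S_n$ has order $2$. By Lemma~\ref{lem:essential-kernel}, $f$ is nondegenerate, so Lemma~\ref{lem:Xi-differentiation} gives $J(f)\cong\chi\otimes S^\lambda$. The standard identity $\sgn\otimes S^\lambda\cong S^{\lambda'}$ then gives the two displayed cases. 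Primitivity also forces $r=1$ canonical blocks, so Theorem~\ref{thm:jacobian-structure}(i) gives $\cF_f^{(d)}=\C^\times f$.

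For (ii), the canonical decomposition has two blocks, $W=W_\lambda^+$, and its stabilizer is $K=A_n$, of index $2$. Theorem~\ref{thm:jacobian-structure}(ii) says $\C p$ is $A_n$-stable with some character $\psi$. Since $A_n$ is perfect for $n\geq5$, $\psi=1$, so $p$ is $A_n$-invariant. Then Theorem~\ref{thm:jacobian-structure}(ii) gives
\[
J(f)\cong\Ind_{A_n}^{S_n}W_\lambda^+ .
\]
This induced module is $S^\lambda$. To see it, note it is $V$ itself: $V=W^+\oplus\tau W^+\cong\Ind_{A_n}^{S_n}W^+$ by the system of imprimitivity. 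Alternatively, use Frobenius reciprocity together with Theorem~\ref{thm:DM}(ii).

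For the fibre, $\tau$ is odd, so it swaps the blocks by Theorem~\ref{thm:DM}(ii). By \eqref{eq:summand-action}, $\tau\cdot p$ is a nonzero multiple of $f_{e_-}$. So $q=\tau\cdot p$ and $f_{e_-}$ span the same line, and Theorem~\ref{thm:jacobian-structure}(i) gives $\cF_f^{(d)}=\{ap+bq: a,b\in\C^\times\}$.

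For the relative invariants, both characters $1$ and $\sgn$ restrict to $\psi=1$ on $A_n$. Take representatives $t_1=1$, $t_2=\tau$. The formula in Theorem~\ref{thm:jacobian-structure}(iii) then yields $f_1=p+q$ and $f_{\sgn}=p-\,q$ (using $\sgn(\tau)^{-1}=-1$), and both lie in the fibre.

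The main thing to get right is the bookkeeping. First, $q$ need not equal $f_{e_-}$ exactly, only up to a nonzero scalar, and that scalar is absorbed into $b$. Second, one should check directly that $\tau^2=1$ gives $\tau\cdot q=p$. Then $\tau$ fixes $p+q$ and negates $p-q$, while every even permutation fixes $p$ and $q$ separately, because $\sigma\cdot q=\tau\cdot(\tau^{-1}\sigma\tau)\cdot p$ and $\tau^{-1}\sigma\tau\in A_n$. This direct check confirms the invariance claims without appealing to the general formula.
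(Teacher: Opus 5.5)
Your proposal is correct and follows essentially the same route as the paper. For (i) you use primitivity via Theorem~\ref{thm:DM} and Theorem~\ref{thm:primitive}, then the one-block case of Theorem~\ref{thm:jacobian-structure}, which you unpack through Lemma~\ref{lem:Xi-differentiation}; for (ii) you apply Theorem~\ref{thm:jacobian-structure} with $K=A_n$ and $\psi=1$ (by perfectness of $A_n$), and finish with the same direct check that $A_n$ fixes $p$ and $q$ while $\tau$ swaps them.
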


\begin{proof}
If $\lambda\neq \lambda'$, $S^{\lambda}$ is primitive, Theorem \ref{thm:primitive} shows that $\C f$ is $S_n$-invariant. Since the only linear characters of $S_n$ are $1$ and $\sgn$, Theorem \ref{thm:jacobian-structure} gives
\eqref{eq:Sn-one-block}.  Now suppose $\lambda=\lambda'$, and that the canonical decomposition is
\[
  \Res_{A_n}^{S_n}S^\lambda
  =
  W_\lambda^+\oplus W_\lambda^-.
\]
Since $A_n$ is perfect for $n\geq5$, it has no nontrivial linear
characters.  By
Theorem~\ref{thm:jacobian-structure}, $k\cdot p=p$ for any $k\in A_n$ and
\[
  J(f)
  \cong
  \Ind_{A_n}^{S_n}W_\lambda^+
  \cong
  S^\lambda.
\]
Since $\tau$
interchanges $W_\lambda^+$ and $W_\lambda^-$, we know $q=\tau\cdot p=\lambda f_{e_-}$, with $\lambda\neq 0$.  Thus Theorem~\ref{thm:jacobian-structure}\textup{(i)} gives
\[
  \cF_f^{(d)}
  =
  \{\,ap+bq:a,b\in\C^\times\,\}.
\]
Moreover, for $k\in A_n$,
\[
  k\cdot q
  =
  k\tau\cdot p
  =
  \tau(\tau^{-1}k\tau)\cdot p
  =
  \tau\cdot p
  =
  q,
\]
thus $A_n$ fixes both $p$ and $q$.
On the other hand,
\[
  \tau\cdot p=q,
  \qquad
  \tau\cdot q=p.
\]
Since $S_n$ is generated by $A_n$ and $\tau$, it follows that $p+q$ is $S_n$-invariant, while $p-q$ affords the sign character.
\end{proof}

\begin{example}\label{cor:standard}
Consider the standard irreducible module of $S_n$, which is
\[
  H_n
  =
  \{(x_1,\ldots,x_n)\in\C^n:
    x_1+\cdots+x_n=0\}
  \cong
  S^{(n-1,1)}.
\]
For $n\geq5$, the partition $(n-1,1)$ is not self-conjugate, so
$H_n$ is primitive by Theorem~\ref{thm:DM}.   Thus for $0\ne f\in\Sym^d(H_n^*)$, with $d\geq 3$, the $S_n$-invariance of $J(f)$ implies that $\C f$ is invariant. Here, the restriction $n\geq5$ is essential.  Indeed, the standard modules of $S_3$ and $S_4$ are imprimitive. For example, consider
\[
  H_3=\{(x_1,x_2,x_3)\in\C^3:x_1+x_2+x_3=0\},
\]
and let $\omega$ be a primitive cube root of unity.  Put
\[
  \ell_1=x_1+\omega x_2+\omega^2x_3,
  \qquad
  \ell_2=x_1+\omega^2x_2+\omega x_3.
\]
The two lines $\C\ell_1$ and $\C\ell_2$ are permuted by $S_3$, thus $H_3^*=\C\ell_1\oplus \C\ell_2$ is a system of imprimitivity for
$H_3^*$. Since $H^*_3\simeq H_3$, we know $H_3$ is not primitive.  Moreover, consider
\[
  f=\ell_1^4+2\ell_2^4.
\]
We have $J(f)=\langle \ell_1^3,\ell_2^3\rangle$, which is $S_3$-invariant.  On the other hand,
\[
  (23)\cdot f=\ell_2^4+2\ell_1^4
\]
is not a scalar multiple of $f$.  Thus $\C f$ is not
$S_3$-invariant.
\end{example}

\subsection{The natural permutation module}
\label{sec:permutation}

We still assume $n\geq 5$. Let $S_n$ act by coordinate permutations on
\begin{equation}\label{eq:naturalrepsn}
  P_n=\C^n=T\oplus H_n,
  \qquad
  \text{in which}\quad T=\C t,\quad t=(1,\ldots,1).
\end{equation}
The module $P_n$ is reducible, so Lemma~\ref{lem:radical-proper} does not
immediately imply that $\rad Z(f)=0$.  However, the decomposition $P_n=T\oplus H_n$ leaves only a few possibilities.

\begin{lemma}\label{lem:permutation-radical}
Let $d\geq3$, 
$0\ne f\in\Sym^d(P_n^*)$ be nondegenerate.  If $J(f)$ is
$S_n$-invariant, then
\[
  \rad Z(f)=0.
\]
\end{lemma}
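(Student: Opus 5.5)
The plan is to run the argument of Lemma~\ref{lem:radical-proper} as far as it goes, and then use the decomposition $P_n=T\oplus H_n$ into two nonisomorphic irreducibles to rule out the two remaining cases by hand. Put $N=\rad Z(f)$. By Lemma~\ref{lem:radical-proper}, $NP_n$ is a proper $S_n$-submodule of $P_n$, and $N$ is nilpotent and stable under the conjugation action $\alpha$. The only proper submodules are $0$, $T$ and $H_n$. If $NP_n=0$ then $N=0$ and we are done, so it remains to exclude $NP_n=T$ and $NP_n=H_n$. In each case we will show that $N$ must be a whole irreducible space of rank-one maps. The center identity \eqref{eq:center-polar} will then force $f$ to be degenerate, contradicting the hypothesis.

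\emph{Case $NP_n=T$.} Each $n\in N$ has image in the line $T$ and is nilpotent, so $n(t)=0$. Hence $n=t\otimes\varphi$ with $\varphi\in P_n^*$ and $\varphi(t)=0$. Thus $N$ is a nonzero $S_n$-stable subspace of $\{t\otimes\varphi:\varphi(t)=0\}\cong H_n$, using that $S_n$ fixes $t$ and that this conjugation action matches the action on functionals. Since $H_n$ is irreducible, $N$ is the whole space. The center identity gives
\[
\varphi(v_1)\,\Theta_f(t,v_2,x,\ldots,x)=\varphi(v_2)\,\Theta_f(t,v_1,x,\ldots,x).
\]
So for fixed $x$ the functional $\alpha_x=\Theta_f(t,\cdot,x,\ldots,x)$ is proportional to every $\varphi$ with $\varphi(t)=0$. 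Because $n-1\ge2$, we can choose two independent such $\varphi$, and this forces $\alpha_x=0$. Then $D_tf=0$ by \eqref{eq:directional-polarization}, contradicting nondegeneracy.

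\emph{Case $NP_n=H_n$.} The key step is to show $NH_n=0$. The space $N(NP_n)=N^2P_n$ is an $S_n$-stable subspace of $H_n$. It is strictly smaller than $NP_n$ because $N$ is nilpotent, so irreducibility of $H_n$ gives $N^2P_n=0$, i.e.\ $NH_n=0$. Therefore each $n\in N$ has the form $n=h\otimes\varepsilon$ with $h\in H_n$, where $\varepsilon$ is the functional with $\varepsilon(t)=1$ and $\varepsilon(H_n)=0$. Again by irreducibility, $N$ contains $h\otimes\varepsilon$ for every $h\in H_n$. Apply the center identity with $v_1=t$ and $v_2=u\in H_n$:
\[
\Theta_f(h,u,x,\ldots,x)=\varepsilon(u)\,\Theta_f(t,h,x,\ldots,x)=0
\qquad\text{for all } h,u\in H_n,\ x\in P_n .
\]
So all second derivatives of $f$ in $H_n$-directions vanish, and $f$ is at most linear in the $H_n$-coordinates: $f=a s^d+s^{d-1}\ell$, where $s$ is the $T$-coordinate and $\ell$ is a linear form on $H_n$. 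Then every $D_hf$ with $h\in H_n$ is a multiple of $s^{d-1}$. Since $\dim H_n\ge2$, some nonzero $h$ has $D_hf=0$, again contradicting nondegeneracy.

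I expect the main obstacle to be the second case: one needs the nilpotency and chain argument to get $NH_n=0$ before the rank-one description becomes available. The first case is a direct computation.
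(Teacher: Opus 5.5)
Your proof is correct and follows essentially the same route as the paper: you reduce via Lemma~\ref{lem:radical-proper} to $NP_n\in\{T,H_n\}$, show in each case that $N$ is the full irreducible space of rank-one maps, and use the center identity to contradict nondegeneracy. The differences are cosmetic. You obtain $NH_n=0$ from $N^2P_n\subsetneq NP_n$ rather than from the paper's common-kernel argument, and in the case $NP_n=T$ you kill $\Theta_f(t,\cdot,x,\ldots,x)$ on all of $P_n$ at once with two independent functionals, where the paper treats $B_f(t,H_n)$ and $B_f(t,t)$ separately.
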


\begin{proof}
Put $N=\rad Z(f)$, $U=NP_n$. By Lemma~\ref{lem:radical-proper}, $U$ is a proper
$S_n$-submodule of $P_n$.  Since $P_n=T\oplus H_n$,  the only possibilities are
\[
  U=0,\qquad U=T,\qquad\text{or}\qquad U=H_n.
\]
If $U=0$, then every element of $N$ acts trivially on $P_n$, and
hence $N=0$.  We show that the other two possibilities lead to
contradictions. Recall that $S_n$ normalizes $Z(f)$ and hence also its Jacobson
radical $N$.  For convenience, write
\[
  B_f(u,v)=D_uD_vf.
\]
The defining identity of the center is
\begin{equation}\label{eq:center-second-derivative}
  B_f(Au,v)=B_f(u,Av)
  \qquad(A\in Z(f),\ u,v\in P_n).
\end{equation}

Suppose first that $U=T$.  Then every $A\in N$ has image contained
in $T$.  Since $A$ is nilpotent and $T$ is one-dimensional, $A$
vanishes on $T$.   Thus restriction to $H_n$ gives an $S_n$-equivariant injection
\[
  \rho:N\hookrightarrow\Hom(H_n,T),
  \qquad
  \rho(A)=A|_{H_n}.
\]
Since $N\ne0$ and $\Hom(H_n,T)\cong H_n^*$ is irreducible, we have $\rho(N)=\Hom(H_n,T)$.  Since $T=\C t$, where $t$ is defined in \eqref{eq:naturalrepsn},  we further identify
\[
  H_n^*\cong\Hom(H_n,T),
  \qquad
  \varphi\longmapsto
  \bigl(v\mapsto\varphi(v)t\bigr).
\]
Therefore, for every $\varphi\in H_n^*$, the corresponding element
of $N$ is the unique endomorphism $A_\varphi$ of
$P_n=T\oplus H_n$ satisfying
\[
  A_\varphi(t)=0,
  \qquad
  A_\varphi(v)=\varphi(v)t
  \quad(v\in H_n).
\]
Substituting $A_\varphi$ into \eqref{eq:center-second-derivative} gives, for any $u,v\in H_n$, 
\begin{equation}\label{eq:keybilinear-1}
  \varphi(u)B_f(t,v)
  =
  \varphi(v)B_f(u,t).
\end{equation}
For each $0\ne v\in H_n$, since $\dim H_n\geq 2$, we can choose $u$ linearly independent from $v$ and
$\varphi\in H_n^*$ such that
\[
  \varphi(u)=1,
  \qquad
  \varphi(v)=0.
\]
Thus \eqref{eq:keybilinear-1} shows $B_f(t,v)=0$ for any $v\in H_n$. Now choose $v\in H_n$ and $\varphi\in H_n^*$ with
$\varphi(v)\ne0$.  Applying
\eqref{eq:center-second-derivative} to the pair $(t,v)$ gives
\[
  0
  =
  B_f(A_\varphi t,v)
  =
  B_f(t,A_\varphi v)
  =
  \varphi(v)B_f(t,t),
\]
thus $B_f(t,t)=0$. Since $P_n=T\oplus H_n$, we conclude that  $B_f(t,v)=0$ for any $v\in P_n$. Equivalently,
\[
  D_v(D_tf)=0
  \qquad \text{for any}\ v\in P_n.
\]
Thus every directional derivative of $D_tf$ vanishes.  Since
$D_tf$ is homogeneous of positive degree, $D_tf=0$, contradicting
the nondegeneracy of $f$.  Therefore $U=T$ is impossible.

Suppose next that $U=H_n$. Then every $A\in N$ has image contained in $H_n$.  We first show
that every element of $N$ vanishes on $H_n$. Consider the common kernel
\[
  K_0=
  \{v\in H_n:Av=0\text{ for all }A\in N\}.
\]
It is nonzero.  Indeed, if $m$ is minimal such that $N^mH_n=0$, then
\[
  0\ne N^{m-1}H_n\subseteq K_0.
\]
Moreover, $K_0$ is $S_n$-stable: if $v\in K_0$, $\sigma\in S_n$,
and $A\in N$, then
\[
  A(\sigma v)
  =
  \sigma\bigl((\sigma^{-1}A\sigma)v\bigr)
  =
  0,
\]
since $N$ is stable under conjugation.  By irreducibility of $H_n$,
we obtain $K_0=H_n$. Hence
\[
  A(H_n)=0
  \qquad(A\in N).
\]
Thus every $A\in N$ satisfies
\[
  A(H_n)=0,
  \qquad
  A(T)\subseteq H_n.
\]
Consequently, restriction to $T$ gives an $S_n$-equivariant
injection
\[
  \rho:N\hookrightarrow\Hom(T,H_n),
  \qquad
  \rho(A)=A|_T.
\]
Since $T=\C t$ is the trivial $S_n$-module, evaluation at $t$ gives
an $S_n$-module isomorphism
\[
  \Hom(T,H_n)\cong H_n,
  \qquad
  \Phi\longmapsto\Phi(t).
\]
If $N\ne0$, irreducibility of $H_n$ therefore gives $ \rho(N)=\Hom(T,H_n)$. Under this identification, any vector $w\in H_n$ corresponds to
the homomorphism
\[
  \Phi_w:T\longrightarrow H_n,
  \qquad
  \Phi_w(t)=w.
\]
Since $\rho$ is an isomorphism onto $\Hom(T,H_n)$, there is a unique
element of $N$ whose restriction to $T$ is $\Phi_w$.  As every
element of $N$ vanishes on $H_n$, this element is precisely the
endomorphism $A_w$ of $P_n=T\oplus H_n$ defined by
\[
  A_w(t)=w,
  \qquad
  A_w(v)=0
  \quad(v\in H_n).
\]
Now take any $v\in H_n$.  Applying
\eqref{eq:center-second-derivative} to $A_w$ and the pair $(t,v)$
gives
\[
  B_f(w,v)
  =
  B_f(A_wt,v)
  =
  B_f(t,A_wv)
  =
  0.
\]
Hence  $B_f(H_n,H_n)=0$. Fix $w\in H_n$.  Then
\[
  D_v(D_wf)=0
  \qquad(v\in H_n),
\]
so $D_wf$ is constant along the $H_n$-directions and therefore
depends only on the one-dimensional quotient $P_n/H_n$.  Since the
annihilator of $H_n$ in $P_n^*$ is spanned by
\[
  s=x_1+\cdots+x_n,
\]
and $D_wf$ is homogeneous of degree $d-1$, we have $ D_wf=c_ws^{d-1}$. Therefore the image of
\[
  H_n\longrightarrow J(f),
  \qquad
  w\longmapsto D_wf,
\]
is contained in the one-dimensional space $\C s^{d-1}$.  Thus the map
\[
  P_n\longrightarrow J(f),
  \qquad
  v\longmapsto D_vf,
\]
is not an isomorphism, since $\dim H_n\geq 2$. This  contradicts  the assumption that $f$ is nondegenerate, see \eqref{eq:nondegeneracyiso}. Thus  $U=H_n$ is also impossible.
\end{proof}

Recall that the primitive orthogonal idempotents $e_1,\dots,e_r$ of $Z(f)$ determine the canonical decomposition
\[
  P_n=V_1\oplus\cdots\oplus V_r,
  \qquad
  V_i=e_iP_n,
\]
and  $S_n$ permutes the primitive idempotents as well as the canonical blocks $V_i$.

\begin{lemma}\label{lem:permutation-blocks}
Let $f$ be as in Lemma~\ref{lem:permutation-radical}.  Its canonical
decomposition has exactly one of the following forms:
\begin{enumerate}[label=\textup{(\roman*)},leftmargin=0.9cm]
\item one block $P_n$;
\item the two blocks $T$ and $H_n$;
\item $n$ one-dimensional blocks permuted transitively by $S_n$.
\end{enumerate}
\end{lemma}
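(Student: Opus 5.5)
By Lemma~\ref{lem:permutation-radical}, $Z(f)=\C e_1\times\cdots\times\C e_r$ is semisimple. Moreover, $S_n$ permutes the primitive idempotents by conjugation, and hence permutes the blocks $V_i=e_iP_n$. The plan is to analyze the orbits of this permutation action. For each $S_n$-orbit $\mathcal O$ of idempotents, $e_{\mathcal O}=\sum_{e_i\in\mathcal O}e_i$ is $S_n$-fixed, so $e_{\mathcal O}P_n$ is an $S_n$-submodule, and $P_n$ is the direct sum of these submodules over the orbits. Since $P_n=T\oplus H_n$ with $T\not\cong H_n$ both irreducible, the submodules are exactly $0,T,H_n,P_n$. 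So either there is one orbit with $e_{\mathcal O}P_n=P_n$, or there are two orbits giving $T$ and $H_n$.

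\emph{Case of two orbits.} The orbit giving $T$ consists of blocks summing to the one-dimensional space $T$, so it is the single block $T$. For the orbit giving $H_n$, the blocks form either a single block $H_n$ or a nontrivial system of imprimitivity for $H_n$. Example~\ref{cor:standard} (via Theorem~\ref{thm:DM}, $n\ge5$) says $H_n$ is primitive, so this orbit is the single block $H_n$. This yields case (ii).

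\emph{Case of one orbit.} Now $S_n$ acts transitively on $r$ blocks of common dimension $m=n/r$, and $P_n\cong\Ind_K^{S_n}W$ with $K=\Stab(V_1)$ of index $r$. If $r=1$ we are in case (i). Otherwise, for $n\ge5$ the subgroups of index $r$ with $1<r<n$ are only $A_n$ (index $2$). Index $r=n$ means $K$ is a point stabilizer $S_{n-1}$, up to conjugacy; the other subgroups have index $>n$, which is too big since $r\le n$.

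So I would rule out $r=2$, $K=A_n$. In that case $P_n=\Ind_{A_n}^{S_n}W$ with $\dim W=n/2$. By Frobenius reciprocity, $\Res_{A_n}P_n$ would then contain $W$ and $\tau W$ as summands with $\Ind W\cong P_n$. But $\Res_{A_n}P_n=\mathbf 1\oplus\Res H_n$, a sum of irreducibles of dimensions $1$ and $n-1$. Hence $W$ would be a sum of these, and dimension $n/2$ is impossible for $n\ge5$ (it would force $n/2\in\{0,1,n-1,n\}$). Alternatively, $\Ind_{A_n}^{S_n}W$ is invariant under tensoring with $\sgn$, while $P_n\not\cong\sgn\otimes P_n$ since $P_n$ has a trivial summand and no sign summand.

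That leaves $r=n$ with $K$ conjugate to $S_{n-1}$ and one-dimensional blocks permuted transitively, which is case (iii). Exactly one form occurs because the number of blocks $r\in\{1,2,n\}$ distinguishes the three cases.

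The main obstacle I expect is the classification of subgroups of $S_n$ of index less than $n$. For $n\ge5$ these are $S_n$ and $A_n$, a standard fact: for $n\ne6$ the action on cosets gives a homomorphism to $S_r$ with $r<n$, whose kernel must be normal, hence $A_n$ or larger. For index exactly $n$, the subgroup is a point stabilizer when $n\ne6$. The case $n=6$ needs separate care because of the exotic transitive $S_5$ inside $S_6$. There case (iii) still holds as stated, since blocks are one-dimensional and transitively permuted; the lemma only asserts transitivity, so no point-stabilizer claim is needed. In fact I would avoid the point-stabilizer issue entirely and use only $r\mid n$ together with the index-$<n$ fact.
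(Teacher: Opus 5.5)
Your proof is correct. The orbit analysis and the two-orbit case match the paper; the difference is in the one-orbit case.

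\emph{The paper's route.} It never looks at stabilizers. It takes the fixed vector $t$ and sets $t_i=e_it$. These vectors are permuted by $S_n$, because $t$ is fixed and the $e_i$ are permuted. They are nonzero by transitivity, and linearly independent because they lie in distinct blocks. So $L=\langle t_1,\ldots,t_r\rangle$ is an $r$-dimensional submodule containing $T$. The only such submodules are $T$ and $P_n$, so $r\in\{1,n\}$ immediately.

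\emph{Your route.} You use $r\mid n$ and the fact that for $n\ge5$ a subgroup of index $<n$ contains $A_n$ (via the core of the coset action). You then exclude $K=A_n$ in one of two ways. One is a dimension count in $\Res_{A_n}P_n=\Triv\oplus\Res_{A_n}H_n$; this needs $\Res_{A_n}H_n$ irreducible, which holds because $(n-1,1)$ is not self-conjugate. The other is the $\sgn$-twist invariance of $\Ind_{A_n}^{S_n}W$.

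\emph{What each buys.} Both arguments are valid. The paper's is more elementary: it uses only the submodule lattice of $P_n$. It also produces the permuted basis $t_1,\ldots,t_n$ that is reused in the proof of Theorem~\ref{thm:permutation-classification}\textup{(iii)}. Yours explains a priori why only $r\in\{1,2,n\}$ can occur and then kills $r=2$. It would adapt to modules without a convenient fixed vector, but it imports the normal-subgroup structure of $S_n$ and facts about $A_n$-restrictions.

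\emph{Two small slips.} First, placing $W$ inside $\Res_{A_n}P_n$ is not Frobenius reciprocity. It is just that $A_n$ stabilizes both $V_1$ and $V_2=\tau V_1$. Second, the caveat $n\ne6$ is unnecessary for the index-$<n$ fact. The core argument works for all $n\ge5$, since the exotic $S_5\le S_6$ has index exactly $6$. You effectively recognize this at the end.
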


\begin{proof}
Let $V_1,\ldots,V_r$ be the canonical blocks.  The sum of the blocks
in each $S_n$-orbit is an $S_n$-submodule of $P_n$.   Since $P_n=T\oplus H_n$, there are at most two orbits. If there are two orbits, their sums must be $T$ and $H_n$.  The
one-dimensional space $T$ is one block, and the primitivity of $H_n$
from Example~\ref{cor:standard} implies that $H_n$ is also one
block.  This gives \textup{(ii)}. If there is only one orbit, then $S_n$ acts transitively on 
$V_1,\ldots,V_r$ and $e_1,\ldots,e_r$.  Recall the element $t=(1,\dots,1)\in P_n$, and write
\[
  t=t_1+\cdots+t_r,
  \qquad
  t_i=e_it\in V_i.
\]
Since $t$ is fixed by $S_n$ and the $e_i$ are permuted,
the vectors $t_i$ are permuted in the same way. Hence $  L=\langle t_1,\ldots,t_r\rangle$ is an $S_n$-submodule containing $T$.  As $ P_n=\bigoplus_{i=1}^r V_i$, the vectors $t_1,\ldots,t_r$ are linearly independent.  If $r=1$, we obtain \textup{(i)}.  If $r>1$, then $L=P_n$, and so $r=n$, we obtain \textup{(iii)}.
\end{proof}

Building on Lemmas \ref{lem:permutation-radical}--\ref{lem:permutation-blocks}, we are now in a position to answer Xi's question completely for the
natural permutation representation $P_n$. 

\begin{theorem}
\label{thm:permutation-classification}
Let $n\geq5$, $d\geq3$, and
\[
 0\neq f\in\Sym^d(P_n^*)= \C[x_1,\ldots,x_n]_d.
\]
Then $J(f)$ is $S_n$-invariant if and only if $f$ belongs
to one of the following families:
\begin{enumerate}[label=\textup{(\roman*)},leftmargin=0.9cm]
\item
The line $\C f$ is $S_n$-invariant, that is, $f$ is symmetric
or alternating.

\item $f=a s^d+h$, where $s=x_1+\cdots+x_n$,  $a\in\C$ and $h$ is the inflation to $P_n$ of a degree-$d$
form on $H_n$ which is either $S_n$-invariant or
$\sgn$-relative invariant.

\item  $f=\sum_{i=1}^n c_i\ell_i^d$, where $c_i\in\C^\times$, and
\begin{equation}\label{eq:permutation-linear-forms}
  \ell_i=x_i+\gamma s,
  \qquad
  1+n\gamma\ne0, \gamma\in \C.
\end{equation}
\end{enumerate}
\end{theorem}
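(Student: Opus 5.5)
We prove the two directions separately; the forward direction carries the content. \emph{Sufficiency.} Each listed family has $S_n$-invariant Jacobian. In \textup{(i)} this follows from \eqref{eq:J-covariance}. In \textup{(ii)}, directions in $T$ differentiate only $s^d$, and directions in $H_n$ differentiate only $h$. Hence $J(f)\subseteq\C s^{d-1}+J(h)$, with equality by a dimension count. Both summands are invariant, because $\C s$ and $\C h$ are invariant lines; if $a=0$ we have simply $J(f)=J(h)$. In \textup{(iii)}, $\sigma\cdot\ell_i=\ell_{\sigma(i)}$. The condition $1+n\gamma\ne0$ makes $\ell_1,\dots,\ell_n$ a basis of $P_n^*$, so $J(f)=\langle\ell_1^{d-1},\ldots,\ell_n^{d-1}\rangle$, and $S_n$ permutes this spanning set.

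\emph{Necessity, degenerate case.} Assume $J(f)$ is invariant. By Kempf's identity $J(f)^\perp=f^\perp$, the space $f^\perp$ is a proper submodule of $P_n$, so it is $0$, $T$ or $H_n$.
If $f^\perp=H_n$, then $f$ depends only on $P_n/H_n$, so $f=a s^d$. This lies in family \textup{(i)}, and also in \textup{(ii)} with $h=0$.
If $f^\perp=T$, then $f$ is the inflation of a nondegenerate form on $H_n$. Its Jacobian is still invariant as a subspace of $\Sym^{d-1}(H_n^*)$. Since $H_n$ is primitive (Example~\ref{cor:standard}), Theorem~\ref{thm:primitive} makes $\C f$ invariant. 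This is \textup{(ii)} with $a=0$, the character being $1$ or $\sgn$.

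\emph{Necessity, nondegenerate case.} By Lemma~\ref{lem:permutation-radical} the center is semisimple, so \eqref{eq:homogeneous-fibre} reads $\cF_f^{(d)}=\{\sum c_if_i:c_i\in\C^\times\}$. We go through the three possibilities of Lemma~\ref{lem:permutation-blocks}.
\begin{itemize}
\item In case \textup{(i)}, $Z(f)=\C$ and $g\cdot f=f_{a_g}$ with $a_g$ a scalar, so $\C f$ is invariant.
\item In case \textup{(ii)}, both blocks $T$ and $H_n$ are $S_n$-stable, so $f=f_T+f_{H}$. Running the argument of Theorem~\ref{thm:jacobian-structure}\textup{(ii)} with $K=S_n$ shows that each of $\C f_T$ and $\C f_H$ is invariant. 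A form on $T$ is a multiple of $s^d$, and $f_H$ affords $1$ or $\sgn$. This gives \textup{(ii)}.
\item In case \textup{(iii)}, put $t_i=e_it$. Each $t_i$ is nonzero, since the $t_i$ span $L=P_n$, and $\sigma t_i=t_{\sigma(i)}$ whenever $\sigma V_i=V_{\sigma(i)}$. Thus $\{t_1,\ldots,t_n\}$ is an $S_n$-stable basis, and $P_n$ is the permutation module of the transitive $S_n$-set $X=\{t_i\}$ of size $n$.
\end{itemize}

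The main obstacle is to identify $X$ with the standard $S_n$-set $\{1,\ldots,n\}$. For $n\ne6$ every subgroup of index $n$ is a point stabilizer $S_{n-1}$, so the identification is immediate. For $n=6$ we must exclude the exotic transitive action. Its permutation character differs from that of $P_6$: a transposition acts as a product of three transpositions and so has $0$ fixed points instead of $4$. Since the permutation module on $X$ is $P_6$ itself, its character must be the natural one, and the exotic action is ruled out.

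After relabeling, the stabilizer of $V_i$ is $\Stab(i)\cong S_{n-1}$. The line $V_i$ is stable under $\Stab(i)$, which has exactly two stable lines in $P_n$ for $n\ge5$: $\C t$ and $\C(\varepsilon_i-t/n)$, where $\varepsilon_i$ is the standard basis vector. Moreover $V_i$ must be equivariant in $i$ and the $V_i$ must span $P_n$; this forces $V_i=\C(\varepsilon_i+\beta t)$ with $1+n\beta\neq0$. Let $\ell_i$ be the dual basis element, the linear form vanishing on $V_j$ for $j\ne i$. By equivariance it has the form $x_i+\gamma s$ up to a common scalar, and invertibility of the dual basis is exactly $1+n\gamma\ne0$. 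Since $f_i$ is a degree-$d$ form on the line $V_i$, it equals $c_i\ell_i^d$ with $c_i\ne0$ by nondegeneracy. This gives \textup{(iii)}.
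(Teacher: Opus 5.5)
Your argument matches the paper in the sufficiency direction, the degenerate cases, and the one- and two-block cases. In the case $f^\perp=T$ you are more explicit than the paper, which leaves implicit the appeal to Theorem~\ref{thm:primitive} for the primitive module $H_n$.

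In the $n$-block case you take a genuinely different route. The paper never identifies the abstract $S_n$-set $\{t_1,\ldots,t_n\}$. It uses that $t_1=e_1t$ is an actual vector of $\C^n$ whose orbit under coordinate permutations has exactly $n$ elements. So the multiplicities of its coordinate values satisfy $n!/(m_1!\cdots m_r!)=n$, which forces multiplicities $n-1$ and $1$. Hence $t_i=\alpha u_i+\beta t$ after relabelling, and $\sum_i t_i=t$ gives $\alpha+n\beta=1$. This count is elementary and uniform in $n$. The exotic transitive action of $S_6$ never needs mentioning, because the block basis sits inside the natural module. You instead invoke the classification of index-$n$ subgroups of $S_n$ and eliminate the $n=6$ exception by comparing permutation characters at a transposition. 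That is valid, and it shows why the outer automorphism cannot interfere, but it imports heavier group theory to reach the same point.

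One claim in your $n$-block argument is false and must be repaired: $\Stab(i)$ does not have exactly two stable lines in $P_n$. Since $\Res_{\Stab(i)}P_n\cong\Triv\oplus\Triv\oplus H_{n-1}$ with $H_{n-1}$ irreducible of dimension $n-2\ge3$, the stable lines are all the lines in the fixed plane $\langle\varepsilon_i,t\rangle$. Read literally, your claim would force $V_i=\C(\varepsilon_i-t/n)\subseteq H_n$ for every $i$ (the $V_i$ being distinct, none equals $\C t$). That contradicts $\bigoplus_iV_i=P_n$, so it would show family (iii) never arises, although $\sum_ix_i^d$ lies in it.

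The correct step is short:
\begin{itemize}
\item $\sigma t_i=t_{\sigma(i)}$ shows that $t_i$ is fixed by $\Stab(i)$, so its coordinates other than the $i$-th coincide and $t_i=a\varepsilon_i+bt$.
\item Equivariance makes $(a,b)$ independent of $i$, and distinctness of the $t_i$ gives $a\ne0$.
\item $\sum_it_i=t$ gives $a+nb=1$.
\end{itemize}
Thus $V_i=\C(\varepsilon_i+\beta t)$ with $\beta=b/a$ and $1+n\beta=1/a\ne0$, which is exactly what your next sentence asserts. With this correction, the dual-basis step and $f_i=c_i\ell_i^d$ with $c_i\ne0$ go through as you wrote.
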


\begin{proof}
We first prove sufficiency.  In \textup{(i)}, invariance of $\C f$
implies invariance of $J(f)$ by \eqref{eq:J-covariance}.  In
\textup{(ii)}, $J(f)=\langle a s^{d-1}\rangle+J(h)$, and both terms are $S_n$-invariant. In \textup{(iii)}, since 
\[D_vf
  =
  d\sum_{i=1}^n
  c_i\ell_i(v)\ell_i^{d-1},\]
we know $J(f)\subset 
  \langle
    \ell_1^{d-1},\ldots,\ell_n^{d-1}
  \rangle$. Define
\[
  \Phi_\gamma:P_n^*\longrightarrow P_n^*,
  \qquad
  \Phi_\gamma(x_i)=\ell_i=x_i+\gamma s.
\]
 $\Phi_\gamma$ is identity on $H_n^*$, while $ \Phi_\gamma(s)=(1+n\gamma)s$. Hence the condition $1+n\gamma\ne0$ implies that $\Phi_\gamma$ is invertible and $\ell_1,\dots,\ell_n$ form a basis of $P_n^*$.  Then it follows that
\[
  J(f)
  =
  \langle
    \ell_1^{d-1},\ldots,\ell_n^{d-1}
  \rangle.
\]
Since $S_n$ permutes the $\ell_i$, the space $J(f)$ is invariant.

Conversely, suppose that $J(f)$ is $S_n$-invariant.  Then
$f^\perp$ is an $S_n$-submodule of $P_n$, thus
\[
  f^\perp\in\{0,T,H_n,P_n\}.
\]
If $f^\perp=P_n$, then $f=0$.  If $f^\perp=H_n$, then $f$ is constant on the cosets of $H_n$ and hence factors through
the quotient $P_n/H_n\cong T$. Thus $f$ depends only on $s$ and  $f=a s^d$. If
$f^\perp=T$, then $f$ is the inflation of a nondegenerate form on
$H_n$ with invariant Jacobian. These cases are contained in
\textup{(ii)}. Finally, let $f^\perp=0$, that is $f$ is nondegenerate.  By
Lemmas~\ref{lem:permutation-radical}--\ref{lem:permutation-blocks}, $Z(f)$ is semisimple, and there are three possibilities:
\begin{itemize}[leftmargin=0.7cm]
\item If the canonical decomposition has one block, then
$Z(f)=\C\operatorname{id}_{P_n}$.  Hence  $\C f$ is $S_n$-invariant, this
gives \textup{(i)}.

\item If the canonical blocks are $T$ and $H_n$, then
\[
  f=a s^d+h,
  \qquad a\ne0,
\]
with $h$ nondegenerate on $H_n$.  Both blocks are fixed by $S_n$.  Let
$e_T$ and $e_H$ be the corresponding primitive idempotents, then  $Z(f)=\C e_T\oplus\C e_H$. Since $J(f)$ is $S_n$-invariant, for every $\sigma\in S_n$ there is
a unit $a_\sigma\in Z(f)^\times$ such that $\sigma\cdot f=f_{a_\sigma}$, cf. \eqref{eq:unit-ag}. Thus
\[
  a_\sigma
  =
  \lambda_T(\sigma)e_T+\lambda_H(\sigma)e_H
\]
for some $\lambda_T(\sigma),\lambda_H(\sigma)\in\C^\times$, and
therefore
\[
  f_{a_\sigma}
  =
  \lambda_T(\sigma)f_T+\lambda_H(\sigma)f_H.
\]
Writing  $f=f_T+f_H=a s^d+h$, and comparing the $H_n$-components, we obtain
\[
  \sigma\cdot h=\lambda_H(\sigma)h.
\]
Hence $\C h$ is $S_n$-invariant. This gives \textup{(ii)}.

\item If the canonical decomposition has $n$ blocks.
Recall from the proof of Lemma~\ref{lem:permutation-blocks} that
\[
  t_i=e_it\in V_i,\qquad i=1,\ldots,n,
\]
form a basis of $P_n$, and that $S_n$ permutes them.  Write
\[
  t_1=(a_1,\ldots,a_n).
\]
If the
distinct coordinate values of $t_1$ occur with multiplicities
$m_1,\ldots,m_r$, then
\[
  \lvert S_n\cdot t_1\rvert
  =
  \frac{n!}{m_1!\cdots m_r!}
  =
  n.
\]
Hence $m_1!\cdots m_r!=(n-1)!$. For $n\ge5$, this forces the multiplicities to be $n-1$ and $1$. Thus, after relabelling the standard coordinates, there exist
$\alpha,\beta\in\C$, with $\alpha\ne0$, such that
\[
  t_i=\alpha u_i+\beta t,
  \qquad i=1,\ldots,n,
\]
where $u_1,\ldots,u_n$ is the standard basis of $P_n$.  Since
\[
  t_1+\cdots+t_n=t,
\]
we obtain $\alpha+n\beta=1$. Let $\lambda_1,\ldots,\lambda_n$ be the basis of $P_n^*$ dual to
$t_1,\ldots,t_n$,  then
\[
  \lambda_i
  =
  \frac{1}{\alpha}(x_i-\beta s).
\]
Set $\gamma=-\beta$, $\ell_i=x_i+\gamma s$. Since $\alpha+n\beta=1$,  $1+n\gamma=\alpha\neq 0$. Each canonical block $V_i=\C t_i$ is one-dimensional, so the
corresponding canonical summand of $f$ is a scalar multiple of
$\lambda_i^d$, and hence also of $\ell_i^d$.  Thus $f=\sum_{i=1}^n c_i\ell_i^d$,  and nondegeneracy implies $c_i\ne0\ (1\le i\le n)$, this gives \textup{(iii)}.
\end{itemize}
This completes the proof.
\end{proof}

Theorem~\ref{thm:permutation-classification} classifies all forms
with $S_n$-invariant Jacobian.  Together with
Proposition~\ref{prop:Jacobian-fibre} and
Lemma~\ref{lem:Xi-differentiation},  the structure of the
Jacobian module and the Jacobian fibre can be described as follows:

\begin{corollary}
\label{cor:permutation-Xi}
Assume that $f$ in
Theorem~\ref{thm:permutation-classification} is nondegenerate.

\begin{enumerate}[label=\textup{(\roman*)},leftmargin=0.9cm]

\item
In the one-block case, suppose that $\C f$ affords the character
$\chi\in\{1,\sgn\}$.  Then
\[
  J(f)\cong\chi\otimes P_n,
  \qquad
  \cF_f^{(d)}=\C^\times f.
\]

\item
In the two-block case, write $f=a s^d+h$, where $\C h$ affords the character
$\chi\in\{1,\sgn\}$.  Then
\[
  J(f)
  \cong
  \Triv\oplus(\chi\otimes H_n),
  \qquad
  \cF_f^{(d)}
  =
  \{\,b s^d+c h:b,c\in\C^\times\,\}.
\]
Here $\Triv$ denotes the trivial one-dimensional module.  Moreover,
the fibre contains a relative invariant if and only if $\chi=1$.

\item
In the $n$-block case, write $f=\sum_{i=1}^n c_i\ell_i^d$, as in Theorem~\ref{thm:permutation-classification}\textup{(iii)}. Then
\[
  J(f)\cong P_n,
  \qquad
  \cF_f^{(d)}
  =
  \left\{
    \sum_{i=1}^n b_i\ell_i^d:
    b_i\in\C^\times
  \right\}.
\]
In particular,  $\sum_{i=1}^n\ell_i^d$ is $S_n$-invariant.
\end{enumerate}
\end{corollary}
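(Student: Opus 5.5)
The three cases share one method. Since $f$ is nondegenerate, Lemma~\ref{lem:Xi-differentiation} applies whenever $\C f$ is invariant. For the fibres, Lemma~\ref{lem:permutation-radical} gives $\rad Z(f)=0$, so \eqref{eq:homogeneous-fibre} becomes $\cF_f^{(d)}=\{\sum_i c_if_i: c_i\in\C^\times\}$, where the $f_i$ are the canonical summands. The work is therefore to identify the summands and the module $J(f)$ in each case of Lemma~\ref{lem:permutation-blocks}.

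\emph{One-block case.} Here $Z(f)=\C\,\mathrm{id}$, so the fibre is $\C^\times f$. By Theorem~\ref{thm:permutation-classification}(i), $\C f$ affords a character $\chi\in\{1,\sgn\}$, and Lemma~\ref{lem:Xi-differentiation} gives $J(f)\cong\chi\otimes P_n$.

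\emph{Two-block case.} The summands are $f_T=as^d$ with $a\ne0$ and $f_H=h$, so the fibre is $\{bs^d+ch: b,c\in\C^\times\}$. Since $J(f)=\C s^{d-1}\oplus J(h)$, I will compute the two pieces separately. The line $\C s^{d-1}$ is trivial. For $J(h)$, restrict to $H_n$: there $h$ is nondegenerate and $\C h$ affords $\chi$, so Lemma~\ref{lem:Xi-differentiation} gives $J(h)\cong\chi\otimes H_n$. Note that the derivative $D_wh$ for $w\in H_n$ equals the inflation of the derivative on $H_n$, and $D_th=0$.

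For the relative-invariant claim, suppose $bs^d+ch$ affords a character $\mu$. Comparing $T$-components gives $\mu=1$, and comparing $H$-components gives $\mu=\chi$. Hence $\chi=1$. Conversely, if $\chi=1$ then $s^d+h$ is invariant. This argument needs $s^d$ and $h$ to be linearly independent, which holds because they live in disjoint variables.

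\emph{$n$-block case.} The canonical summands are multiples of $\ell_i^d$, so the fibre is $\{\sum_i b_i\ell_i^d: b_i\in\C^\times\}$. The form $\sum_i\ell_i^d$ is invariant because $S_n$ permutes the $\ell_i$. It lies in the fibre, so Proposition~\ref{prop:Jacobian-fibre}(iii) shows that $J(f)=J(\sum_i\ell_i^d)$. Applying Lemma~\ref{lem:Xi-differentiation} with the trivial character then gives $J(f)\cong P_n$. Alternatively, $J(f)=\langle\ell_i^{d-1}\rangle$ is a permutation module on $n$ independent vectors permuted like the coordinates.

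\emph{Main obstacle.} The only delicate point is the identification $J(h)\cong\chi\otimes H_n$ in the two-block case, which requires checking that inflation is compatible with differentiation. I expect this to be routine.
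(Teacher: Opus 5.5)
Your proposal is correct and follows the route the paper indicates: the fibres come from Proposition~\ref{prop:Jacobian-fibre}(iii) together with $\rad Z(f)=0$ from Lemma~\ref{lem:permutation-radical}, and the module structure comes from Lemma~\ref{lem:Xi-differentiation}, applied to $f$, to $h$ on $H_n$, and to the invariant form $\sum_i\ell_i^d$ in the three cases respectively. The paper states the corollary without a written proof, and the small checks you flag (compatibility of inflation with differentiation, $D_th=0$, linear independence of $s^d$ and $h$) are exactly the details needed to complete it.
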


\bigskip
\noindent
\textbf{Acknowledgment.}
 This work is supported by the National Natural 
Science Foundation of China (No. 12401028).


\begin{thebibliography}{99}

\bibitem{DjokovicMalzan}
D.~\v{Z}. Djokovi\'c and J.~Malzan,
\newblock Imprimitive irreducible complex characters of the symmetric
group,
\newblock \emph{Math. Z.} \textbf{138} (1974), 219--224.

\bibitem{FultonHarris}
W.~Fulton and J.~Harris,
\newblock \emph{Representation Theory: A First Course},
\newblock Graduate Texts in Mathematics, vol.~129,
Springer-Verlag, New York, 1991.

\bibitem{Harrison}
D.~K. Harrison,
\newblock A Grothendieck ring of higher degree forms,
\newblock \emph{J. Algebra} \textbf{35} (1975), 123--138.


\bibitem{HLYZ}
H.-L. Huang, H.~Lu, Y.~Ye, and C.~Zhang,
\newblock On centres and direct sum decompositions of higher degree
forms,
\newblock \emph{Linear Multilinear Algebra} \textbf{70} (2022),
7290--7306.

\bibitem{Hwang}
J.-M. Hwang,
\newblock Symmetrizer group of a projective hypersurface,
\newblock \emph{J. Math. Soc. Japan} \textbf{78} (2026), 55--62.

\bibitem{Isaacs}
I.~M. Isaacs,
\newblock \emph{Character Theory of Finite Groups},
\newblock Academic Press, New York, 1976.

\bibitem{JamesKerber}
G.~James and A.~Kerber,
\newblock \emph{The Representation Theory of the Symmetric Group},
\newblock Encyclopedia of Mathematics and its Applications, vol.~16,
Addison--Wesley, Reading, MA, 1981.

\bibitem{Kempf}
G.~R. Kempf,
\newblock Jacobians and invariants,
\newblock \emph{Invent. Math.} \textbf{112} (1993), 315--321.


\bibitem{MatherYau}
J.~N. Mather and S.~S.-T. Yau,
\newblock Classification of isolated hypersurface singularities by
their moduli algebras,
\newblock \emph{Invent. Math.} \textbf{69} (1982), 243--251.

\bibitem{ORyanShapiro}
M.~O'Ryan and D.~B. Shapiro,
\newblock Centers of higher degree forms,
\newblock \emph{Linear Algebra Appl.} \textbf{371} (2003), 301--314.

\bibitem{Proszynski}
A.~Pr{\'o}szy{\'n}ski,
\newblock On orthogonal decomposition of homogeneous polynomials,
\newblock \emph{Fund. Math.} \textbf{98} (1978), no.~3, 201--217.

\bibitem{Xi}
N.~Xi,
\newblock Module structure on invariant Jacobians,
\newblock \emph{Math. Res. Lett.} \textbf{19} (2012), no.~3,
731--739.

\bibitem{YauMemoir}
S.~S.-T. Yau,
\newblock \emph{Classification of Jacobian Ideals Invariant by
$\mathfrak{sl}(2,\C)$ Actions},
\newblock Mem. Amer. Math. Soc., vol.~72, no.~384, 1988.

\end{thebibliography}
\end{document}